\newtheorem{thm}{Theorem}[section]
\newtheorem{cor}[thm]{Corollary}
\newtheorem{prop}[thm]{Proposition}
\newtheorem{lem}[thm]{Lemma}
\newtheorem{rem}[thm]{Remark}
\def\R{{\mathbb{R}}}
\def\T{{\mathbb{T}}}
\def\N{{\mathbb{N}}}
\def\Z{{\mathbb{Z}}}
\newcommand{\supp}{\mbox{supp}\,}
\renewcommand{\d}{\delta}
\newcommand{\eps}{\epsilon}
 \numberwithin{equation}{section}
\begin{document}

\thanks{ AMS subject classification: Primary 42A45, Secondary 11L05}

\thanks{The second author is partially supported by the  NSF grant DMS-1800305.
	The third author is supported by the National Science Centre of Poland within the research project OPUS 2017/27/B/ST1/01623.
}

\begin{abstract}
We prove $\ell^p$-improving estimates for the averaging operator along the discrete paraboloid in the sharp range of $p$ in all dimensions $n\ge 2$.
\end{abstract}

\title[] {Sharp $\ell^p$-improving estimates for the discrete paraboloid}

\author{Shival Dasu}
\address{Department of Mathematics, Indiana University,  Bloomington IN}
\email{sdasu@iu.edu}

\author{Ciprian Demeter}
\address{Department of Mathematics, Indiana University,  Bloomington IN}
\email{demeterc@indiana.edu}

\author{Bartosz Langowski}
\address{Department of Mathematics, Indiana University,  Bloomington IN \newline
Wroc\l{}aw University of Science and Technology,\newline
Faculty of Pure and Applied Mathematics, Wroc\l{}aw, Poland}
\email{balango@iu.edu}

\maketitle

\section{Introduction and notation}
In \cite{HKLMY} the authors study averaging operators along a discrete moment curve. More precisely, they consider
\begin{equation*}
\widetilde{A}_Nf(x_1,\dots, x_n)=\frac{1}{N}\sum_{k=1}^N f(x_1+k, x_2+k^2,\dots, x_n+k^n),\qquad (x_1,\dots, x_n)\in\Z^n,
\end{equation*}
and prove (see \cite[Theorem 1.14]{HKLMY}) for $n\ge 3$ and $2-\frac{2}{n^2+n+1}<p\le 2$ the $\ell^p$-improving estimate
$$
\|\widetilde{A}_N f\|_{\ell^{p'}(\Z^n)}\lesssim N^{-\frac{n(n+1)}{2}(\frac{2}{p}-1)}\|f\|_{\ell^p(\Z^n)}.
$$
The range of $p$ in their theorem is not  sharp. Testing the above estimate with standard examples suggests that the optimal range should be $2-\frac{2}{n^2+n}\le p\le 2$.
\smallskip

In this paper we use the circle method to prove the optimal bounds for the averaging operator along a discrete paraboloid. In particular, our main result, Theorem \ref{thm:main}, gives a sharp estimate (except for an $N^\epsilon$ term at the endpoint) for the averages along the discrete moment curve in dimension $n=2$ (in that case the moment curve and the parabola coincide).
\smallskip

We remark that the $\ell^p$-improving estimates in the discrete setting have been studied extensively in the recent years, see e.g. \cite{AM, HKLMY, K, KL, madrid}.

To state our result we need to define first the discrete paraboloid
$$
\mathbb{P}_{n-1}^N=\{(k_1,\dots, k_{n-1}, k_1^2+\dots+k_{n-1}^2)\in\mathbb{Z}^n:1\le k_i \le N, i=1,\dots, n-1\}.
$$
For $f:\mathbb{Z}^n\rightarrow\mathbb{C}$ we consider the averaging operator
\begin{align*}
A^{\mathbb{P}}_N f(x)=\frac{1}{N^{n-1}}\sum_{k_1=1}^N\dots \sum_{k_{n-1}=1}^N f(x_1+k_1, \dots, x_{n-1}+k_{n-1}, x_n+k_1^2+\dots+k_{n-1}^2).
\end{align*}
The main theorem of the paper reads as follows.
\begin{thm}[$\ell^p$ improving for the paraboloid]
\label{thm:main}
Let $\frac{n+3}{n+1}<p\le 2$. The following bound holds
\begin{equation}\label{eq:main}
\| A^\mathbb{P}_N f\|_{\ell^{p'}(\Z^n)}\lesssim  N^{-(n+1)(\frac{2}p-1)}\|f\|_{\ell^p(\Z^n)}.
\end{equation}
Moreover, the above result is essentially sharp in two ways. First, the exponent $-(n+1)(\frac{2}p-1)$ cannot be improved when $p$ is in our range. Second, the above inequality is false if $p\notin [\frac{n+3}{n+1},2]$.
\end{thm}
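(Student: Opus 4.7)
Since $A^{\mathbb{P}}_N$ is a convex combination of translations, $\|A^{\mathbb{P}}_N\|_{\ell^2\to\ell^2}\le 1$ trivially; by Riesz--Thorin interpolation, it therefore suffices to prove \eqref{eq:main} at (or arbitrarily close to) the endpoint $p_0:=(n+3)/(n+1)$ with a tolerated $N^\e$ loss, which is absorbed into the sharp exponent whenever $p>p_0$. The other endpoint $p=2$ is the trivial $\ell^2\to\ell^2$ bound.

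My plan at the endpoint is the Hardy--Littlewood circle method applied to the Fourier multiplier
$$m_N(\xi)=\prod_{j=1}^{n-1}\phi_N(\xi_j,\xi_n),\qquad \phi_N(\a,\b)=\frac{1}{N}\sum_{k=1}^N e^{2\pi i(k\a+k^2\b)}.$$
I would partition $\mathbb{T}_{\xi_n}$ into Farey neighborhoods of reduced rationals $a/q$. On the arc at $a/q$, splitting $k$ modulo $q$ and applying Poisson summation exposes a Gauss-sum amplitude $q^{-1/2}$ multiplying a smooth continuous Weyl integral of effective length $N/q$. Transferring to the kernel side models the per-arc contribution to $A^{\mathbb{P}}_N$, up to acceptable errors, as a Gauss-sum-weighted copy of the rescaled continuous paraboloid average
$$\tilde B_{N/q}g(y)=(N/q)^{-(n-1)}\int_{[0,N/q]^{n-1}} g\bigl(y+(t,|t|^2)\bigr)\,dt,$$
which by scaling satisfies $\|\tilde B_{N/q}\|_{L^p\to L^{p'}}\lesssim(N/q)^{-(n+1)(2/p-1)}$ in the sharp continuous range. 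For $n\le 3$ the classical Tomas--Stein theorem covers $p=p_0$ directly; for $n\ge 4$ one substitutes $\ell^2$-decoupling for the paraboloid (Bourgain--Demeter) to furnish the required continuous estimate at the appropriate exponent. Summing over $q\le N^\delta$ with the Gauss-sum gain $q^{-(n-1)/2}$ and the $\sim q$ numerators at each $q$ produces the target $N^{-(n+1)(2/p_0-1)+O(\delta)}$, folded into the $N^\e$ budget. The minor arcs ($q>N^\delta$) are then handled via Weyl's inequality $|\phi_N(\a,\b)|\lesssim N^\e(q^{-1}+N^{-1}+qN^{-2})^{1/2}$, dyadically decomposed in $q$ and interpolated with the trivial kernel bound $\|K_N\|_{\ell^\infty}\le N^{-(n-1)}$, yielding a contribution strictly below the major-arc bound for suitably small $\delta>0$.

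Sharpness is verified by standard test functions: $f=\mathbf{1}_{\{0\}}$ forces $p\ge p_0$, because $\|A^{\mathbb{P}}_N\delta_0\|_{\ell^{p'}}=N^{(n-1)/p'-(n-1)}$ matches $N^{-(n+1)(2/p-1)}$ precisely at $p=p_0$; the parabolic box $f=\mathbf{1}_{[1,N]^{n-1}\times[1,N^2]}$ shows that the exponent $-(n+1)(2/p-1)$ is sharp throughout $(p_0,2]$, both sides scaling identically because $1/p+1/p'=1$; and $p>2$ is ruled out because $A^{\mathbb{P}}_N:\ell^p\to\ell^{p'}$ is unbounded to begin with --- taking $f$ supported on $M$ well-separated atoms gives $\|A^{\mathbb{P}}_N f\|_{\ell^{p'}}/\|f\|_{\ell^p}\sim M^{1/p'-1/p}\to\infty$ as $M\to\infty$. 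The principal obstacle I foresee is the endpoint balance: for $n\ge 4$ the continuous Tomas--Stein range does not reach $p_0$, making the decoupling substitute essential in the major-arc model; moreover, at $p=p_0$ the Gauss-sum summation, continuous-model bound, and minor-arc Weyl estimate all balance without slack, which is precisely why \eqref{eq:main} holds only for $p$ strictly larger than $p_0$.
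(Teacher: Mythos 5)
Your overall architecture (circle method, Gauss sums, interpolation against the trivial $\ell^2\to\ell^2$ bound) and your sharpness examples (the delta function, the parabolic box $\mathbf{1}_{[1,N]^{n-1}\times[1,N^2]}$, and the separated-atoms argument excluding $p>2$) match the paper's, and the necessity part of your argument is correct. The positive direction, however, has a genuine gap at its very first step: the claim that an endpoint bound with an $N^\epsilon$ loss is ``absorbed'' for $p>p_0$ by interpolation with $\ell^2\to\ell^2$ is false. If $\|A^{\mathbb{P}}_N\|_{\ell^{p_0}\to\ell^{p_0'}}\le C_\epsilon N^{\epsilon-(n+1)(2/p_0-1)}$, then at an intermediate $p$ with parameter $\theta\in(0,1)$ Riesz--Thorin gives $C_\epsilon^{\theta}N^{\theta\epsilon-(n+1)(2/p-1)}$: the exponent $-(n+1)(\tfrac2p-1)$ is linear in $1/p$, so there is no slack anywhere in the range, the loss $N^{\theta\epsilon}$ survives for every fixed $\epsilon>0$, and $C_\epsilon\to\infty$ prevents letting $\epsilon\to0$. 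Removing the $\epsilon$ is exactly the content of Theorem \ref{thm:main} beyond Theorem \ref{thm:maineps}, and the paper devotes Section \ref{sec:main} to it: the divisor bound $d(k)\lesssim_\epsilon k^\epsilon$ is replaced by the level-set estimate of Lemma \ref{lem:divisors}, which yields restricted (characteristic-function) major-arc bounds carrying a free parameter $M$ that trades a rapidly decaying $M^{-B}\|f\|_{\ell^{p'}}$ term against an $M\|f\|_{\ell^{p}}$ term; one then runs a level-set/layer-cake argument optimizing $M$ in terms of $\lambda$ and $N$ and interpolates the resulting restricted $\ell^p\to\ell^q$ bounds. It is precisely this step that sacrifices the endpoint $p=\frac{n+3}{n+1}$.

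A second, quantitative problem is your major/minor cut at $q\le N^{\delta}$ with $\delta$ small. On the complementary arcs Weyl's inequality only gives $|\phi_N|\lesssim N^{\epsilon-\delta/2}$ per factor, so interpolating with the kernel bound $N^{-(n-1)}$ yields a minor-arc contribution of order $N^{-\theta(n-1)\delta/2-(1-\theta)(n-1)}$ with $\theta=2/p'$, while the target is $N^{-(n+1)(1-\theta)}$; comparing exponents forces $\delta\ge n(n-1)/2\ge 1$ at $p_0$ (already $\delta\ge1$ for $n=2$), so no ``suitably small $\delta$'' closes the argument near the endpoint. The major arcs must extend to $q\sim N$, as in the paper ($q\le N/10$), so that Dirichlet forces $q\simeq N$ on the minor arcs and each Gauss-sum factor gains the full $N^{-1/2}$. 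Finally, the detour through the continuous paraboloid average is unnecessary and slightly off target: the $L^p\to L^{p'}$ improving bound for curved hypersurface measure at $p_0$ is classical in every dimension (Littman's $L^{(n+1)/n}\to L^{n+1}$ estimate, and $p_0>\tfrac{n+1}{n}$), whereas decoupling does not directly produce $L^p\to L^{p'}$ improving because the operator is not of $TT^*$ form, as the paper itself points out; the paper sidesteps the continuous model altogether, using only $L^\infty$ bounds on the multiplier pieces and $\ell^\infty$ bounds on their Fourier coefficients.
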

The operator norm bound in this theorem should be compared with the trivial estimate, for $1\le p\le \infty$
\begin{equation}
\label{trivial}
\| A^\mathbb{P}_N f\|_{\ell^{p}(\Z^n)}\le  \|f\|_{\ell^p(\Z^n)}.\end{equation}
This is due to the uniform integrability of the kernel $\frac1{N^{n-1}}K_N^\mathbb{P}$, with
$$K_N^\mathbb{P}(x)=\sum_{k_1=1}^N\dots \sum_{k_{n-1}=1}^N\ \delta_{(k_1,\dots,k_{n-1}, k_1^2+\dots+k_{n-1}^2)}(x).$$
Interpolating \eqref{eq:main} with the trivial bounds
$$\| A^\mathbb{P}_N f\|_{\ell^{\infty}(\Z^n)}\le \|f\|_{\ell^\infty(\Z^n)}$$
and
$$\| A^\mathbb{P}_N f\|_{\ell^{1}(\Z^n)}\le \|f\|_{\ell^1(\Z^n)}$$
shows that the estimate
$$\| A^\mathbb{P}_N f\|_{\ell^{q}(\Z^n)}\lesssim  N^{-(n+1)(\frac1p-\frac1q)}\|f\|_{\ell^p(\Z^n)}$$
holds for all $(\frac1p,\frac1q)$ inside the triangle in Figure 1.
\medskip

It is also worth observing that $\| A^\mathbb{P}_N \|_{\ell^p(\Z^n)\mapsto\ell^{q}(\Z^n)}=\infty$ if $q<p$. Indeed, if not, then for each $f\in\ell^p(\Z^n)$ and $\textbf{h}\in\Z^n$, writing  $f_\textbf{h}(x)=f(x)+f(x+\textbf{h})$ we have
$$\lim_{\|\textbf{h}\|\to\infty}\|f_\textbf{h}\|_{\ell^p(\Z^n)}=2^{1/p}\|f\|_{\ell^p(\Z^n)},\;\;\;\lim_{\|\textbf{h}\|\to\infty}\|A^\mathbb{P}_Nf_\textbf{h}\|_{\ell^q(\Z^n)}=2^{1/q}\|A^\mathbb{P}_N f\|_{\ell^q(\Z^n)}.$$
This would in turn force the existence for each $\epsilon>0$ of some $f\in\ell^p(\Z^n)$ (not the zero function) and $\textbf{h}$ such that
$$\|A^\mathbb{P}_Nf_\textbf{h}\|_{\ell^q(\Z^n)}\ge \|f_\textbf{h}\|_{\ell^p(\Z^n)}(\| A^\mathbb{P}_N \|_{\ell^p(\Z^n)\mapsto\ell^{q}(\Z^n)}-\epsilon)2^{\frac1q-\frac1p},$$
leading to a contradiction.

\bigskip

Our proof of Theorem \ref{thm:main}  relies on obtaining suitable estimates for the corresponding Fourier multiplier. For this purpose we use the Hardy-Littlewood circle method and estimates for the exponential sums from \cite{bourg}.
\bigskip

In Section \ref{sec:maineps} we first prove a version of Theorem \ref{thm:main} which covers also the endpoint $p=\frac{n+3}{n+1}$, however with the $\eps$-loss in the power of $N$.
\begin{thm}\label{thm:maineps}
Let $\frac{n+3}{n+1}\le p\le2$. For any $\epsilon>0$ the following bound holds
\begin{equation}\label{eq:maineps}
\| A^\mathbb{P}_N f\|_{\ell^{p'}(\Z^n)}\lesssim_\epsilon N^\epsilon N^{-(n+1)(\frac{2}p-1)}\|f\|_{\ell^p(\Z^n)}.
\end{equation}
\end{thm}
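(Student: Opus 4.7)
The strategy is the Hardy--Littlewood circle method applied to the Fourier multiplier of $A_N^{\mathbb{P}}$, which on $\mathbb{T}^n$ is
$$
m_N(\xi) = \prod_{j=1}^{n-1}\sigma_N(\xi_j,\xi_n),\qquad
\sigma_N(\eta_1,\eta_2) := \frac{1}{N}\sum_{k=1}^{N} e^{2\pi i(k\eta_1+k^2\eta_2)}.
$$
The crucial structural feature is that the $n-1$ quadratic Weyl sums share the same quadratic frequency $\xi_n$. I would apply Dirichlet's theorem in $\xi_n$ with parameter $Q=N^{2-\delta}$, and decompose $m_N = m_N^{\mathrm{maj}}+m_N^{\mathrm{min}}$ according as $\xi_n=a/q+\beta$ (with $(a,q)=1$) lies in a narrow major-arc regime $q \le N^\delta$, $|\beta| \le N^{-2+\delta}$, or not. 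The $N^\epsilon$ loss in \eqref{eq:maineps} absorbs logarithmic errors from this decomposition.

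\textbf{Minor arcs.} On the minor arcs of $\xi_n$, Weyl's inequality for quadratic sums (uniform in the linear frequency, in the sharp form of~\cite{bourg}) gives $|\sigma_N(\eta_1,\eta_2)| \lesssim N^{-1/2+\epsilon}$, hence $\|m_N^{\mathrm{min}}\|_\infty \lesssim N^{-(n-1)/2+\epsilon}$. Since the convolution kernel of $A_N^{\mathbb{P}}$ is $\{0,1/N^{n-1}\}$-valued, $\|K^{\mathrm{min}}\|_\infty \lesssim N^{-(n-1)}$ passes to the minor-arc kernel as well. Interpolating $\ell^1\to\ell^\infty$ against $\ell^2\to\ell^2$ yields $\|T_{m_N^{\mathrm{min}}}\|_{\ell^p\to\ell^{p'}} \lesssim N^{-(n-1)/p+\epsilon}$. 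A direct check shows $(n-1)/p = (n+1)(2/p-1)$ exactly at $p=\frac{n+3}{n+1}$, with the inequality $(n-1)/p \ge (n+1)(2/p-1)$ holding throughout $p\in[\frac{n+3}{n+1},2]$, so the minor arcs contribute within the budget of \eqref{eq:maineps}.

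\textbf{Major arcs.} On $\xi_n = a/q+\beta$ with $q \le N^\delta$, writing $k=\ell q + r$ and using $e^{2\pi i k^2 a/q}=e^{2\pi i r^2 a/q}$ decouples $\sigma_N$ as a Gauss sum in $r\bmod q$ times a slowly varying sum in $\ell$, which Poisson summation in each linear variable $\xi_j$ approximates by a continuous oscillatory integral. This leads schematically to
$$
m_N^{\mathrm{maj}}(\xi) \approx \sum_{q \le N^\delta}\sum_{(a,q)=1}\sum_{\vec b \in \mathbb{Z}_q^{n-1}} \Big(\prod_{j=1}^{n-1}G(b_j,a,q)\Big)\,\Phi_{N,q}\bigl(\xi-(\vec b/q, a/q)\bigr),
$$
with $G(b,a,q)=\frac{1}{q}\sum_{r=0}^{q-1} e^{2\pi i(rb+r^2a)/q}$ satisfying $|G|\lesssim q^{-1/2}$ and $\Phi_{N,q}$ a smooth cutoff of the continuous paraboloid multiplier on $\mathbb{R}^n$. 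The plan is to transfer the sharp continuous $L^p$-improving estimate for averages on the paraboloid to each rational shift via a sampling argument, exploit the Gauss-sum decay $\prod_j|G|\lesssim q^{-(n-1)/2}$, and sum over $(a,\vec b, q)$. The main obstacle is this last step: the budget is exactly saturated at $p=\frac{n+3}{n+1}$, where the $q^{-(n-1)/2}$ cancellation must balance the $q^n$ multiplicity of rational shifts (up to $N^\epsilon$). Controlling the $\ell^{p'}$-norm of a sum of modulated bumps centered at the points $(\vec b/q, a/q)$ --- rather than the easier $\ell^2$-norm --- calls either for a refined almost-orthogonality argument exploiting the disjointness of the Fourier supports, or for an Ionescu--Wainger-type multiplier theorem for arithmetic frequencies.
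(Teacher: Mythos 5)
Your framework (circle method in the single variable $\xi_n$, interpolation of $\ell^2\to\ell^2$ against $\ell^1\to\ell^\infty$, and the exponent arithmetic showing $(n-1)/p\ge(n+1)(2/p-1)$ exactly on $p\ge\frac{n+3}{n+1}$) matches the paper, but the proof has a genuine gap at its core: the major-arc analysis is not carried out, and you explicitly flag it as ``the main obstacle.'' The paper does not transfer a continuous $L^p$-improving estimate or sum modulated bumps over rational shifts $(\vec b/q,a/q)$; no decomposition in the linear variables is needed at all, because the Gauss-sum bound \eqref{eq:gauss} is uniform in the linear frequency $y$. Instead each major-arc piece $m_{Q,l}$ (localized to $q\simeq Q$ and $|\xi_n-a/q|\simeq (2^lNQ)^{-1}$) is estimated in exactly two ways: the multiplier bound $\|m_{Q,l}\|_{L^\infty}\lesssim(N2^l)^{(n-1)/2}$ from \eqref{eq:gauss}, and --- the ingredient your plan lacks --- the kernel bound $\|\widehat{m_{Q,l}}\|_{\ell^\infty}\lesssim_\epsilon(N2^l)^{-1}(QN)^\epsilon$ of Lemma \ref{lem:maj}. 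The latter comes from computing $\widehat{m_{Q,l}}(r)$ explicitly as $\sigma_N(r_1)\cdots\sigma_N(r_{n-1})\sum_{q}\sum_{a\in A_q}\widehat{\eta_{l,a,q}}(|r'|^2-r_n)$, killing the diagonal $|r'|^2=r_n$ via the mean-zero construction \eqref{eq:meanzero}, and controlling the off-diagonal Ramanujan-type sums $\sum_{a\in A_q}e(ak/q)$ by Bourgain's divisor lemma (Lemma \ref{lem:bourg}). Without a substitute for this $\ell^1\to\ell^\infty$ input the scheme cannot close: near $2^l\simeq N/Q$ the multiplier bound $(N2^l)^{(n-1)/2}\simeq(N^2/Q)^{(n-1)/2}$ carries essentially no saving over the trivial $N^{n-1}$ (in your normalization, only a factor $Q^{-(n-1)/2}$, which is nothing for small $Q$).

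There are two further problems. First, your arc decomposition is flawed: with major arcs $q\le N^\delta$, $|\beta|\le N^{-2+\delta}$, the complement contains, e.g., $\xi_n=2N^{-2+\delta}$ (so $q=1$), where $\bigl|\sum_{k\le N}e(k\eta+k^2\xi_n)\bigr|\simeq\min(N,|\xi_n|^{-1/2})\simeq N^{1-\delta/2}$ for suitable $\eta$; Weyl's inequality gives no square-root cancellation there, so $\|m_N^{\mathrm{min}}\|_\infty\lesssim N^{-(n-1)/2+\epsilon}$ is false for your arcs. The paper takes the full Dirichlet arcs $|\xi_n-a/q|\le 1/(qN)$, $q\le N/10$, as major arcs and decomposes them dyadically in $|\beta|$; the intermediate regions you would call minor are precisely the pieces $m_{Q,l}$ with $2^l$ large, rescued by the kernel estimate above. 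Second, the assertion that $\|K^{\mathrm{min}}\|_\infty\lesssim N^{-(n-1)}$ ``passes to the minor-arc kernel'' is unjustified: restricting a multiplier to the minor arcs does not preserve pointwise kernel bounds. The paper obtains $\|\widehat{m_N^{\mathrm{min}}}\|_{\ell^\infty}\lesssim_\epsilon N^\epsilon$ only by the triangle inequality from the major-arc kernel estimates \eqref{eq:est2} and \eqref{eq:est4} --- so this step, too, requires the missing major-arc kernel analysis.
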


In Section \ref{sec:main} we adopt Bourgain's argument from \cite{bourg} to remove the $N^\epsilon$ factor from the estimate in Theorem \ref{thm:maineps} at the expense of moving away from the endpoint. The interesting question about the validity of \eqref{eq:main} for $p=\frac{n+3}{n+1}$ remains open.
\bigskip

There are similarities between the $l^p$ improving problem considered here and the discrete restriction estimate for the paraboloid, first considered in the landmark paper \cite{bourg}. This restriction problem is about proving sharp estimates of the form
\begin{equation}
\label{TT}
\|g\ast \widehat{K_N^\mathbb{P}}\|_{L^{p'}(\T^n)}\lesssim N^{\alpha_p}\|g\|_{L^{p}(\T^n)}
\end{equation}
for $1\le p\le 2$.
Partial progress on this problem has been made in \cite{bourg} by combining the circle method with $L^1\to L^\infty$ and $L^2\to L^2$ interpolation, similar to what we do in this paper. However, this method could not yield the full range of estimates \eqref{TT} in any dimension. Instead, the restriction problem has been solved in \cite{BD} (in all dimensions, apart from $N^\epsilon$  losses) using $\ell^2$ decoupling. This reduction to decoupling was possible in part because of the $TT^*$ method. Indeed, \eqref{TT} is easily seen to be equivalent with the $L^2$ based inequality
$$\|\widehat{K_N^\mathbb{P} f}\|_{L^{p'}(\T^n)}\lesssim N^{\frac{\alpha_p}{2}}\|f\|_{L^{2}(\Z^n)}.$$
A similar reduction is not possible for $\ell^p$ improving, as  the operator $f\mapsto f\ast {K_N^\mathbb{P}}$ is not positive, thus not of the form $TT^*$.

\subsection*{Notation}

Throughout the paper we use standard notation with all symbols referring to the spaces $\Z^n$ and $\T^n:=[0,1)^n$. Further, we write $\ast$ for the convolution on  $\Z^n$.  We set $\N = \{1,2, \ldots\}$, $\mathcal{D}=\{2^m:m\in\Z\}$. Moreover, we let $e(t)=e^{2\pi i t}$ and use the following notation for the Fourier transform on $\Z^n$
\begin{equation*}
\widehat{f} (\xi)= \sum_{m \in \Z^n} f(m) e(m \xi), \qquad f \in \ell^2(\Z^n), \quad \xi \in \T^n.
\end{equation*}

While writing estimates, we will use the notation $X \lesssim Y$
to indicate that $X \le CY$ with a positive constant $C$ independent of significant quantities.
We shall write $X \simeq Y$ when simultaneously $X \lesssim Y$ and $Y \lesssim X$.

\section{Hardy-Littlewood decomposition and the proof of Theorem \ref{thm:maineps}}\label{sec:maineps}
For $N\in\N$ let $\sigma_N:\Z\rightarrow [0,1]$ be a function satisfying $\mathbbm{1}_{(-N,N)}\le\sigma_N\le \mathbbm{1}_{(-2N,2N)}$ and such that $s_k=\sigma_N(k+1)-\sigma_N(k)$ is bounded by $1/N$ and has total variation bounded by $1/N$
$$\sum_{k\in\Z} | s_{k+1}-s_{k}|\le \frac1N.$$
Define
\begin{align*}
A_N f(x)=\frac{1}{N^{n-1}}K_N\ast f(x),
\end{align*}
where
\begin{align*}
K_N(x)=\sum_{k_1\in\Z}\dots \sum_{k_{n-1}\in\Z}\sigma_N(k_1)\dots \sigma_N(k_{n-1}) \delta_{(k_1,\dots,k_{n-1}, k_1^2+\dots+k_{n-1}^2)}(x).
\end{align*}
Note that if $f\ge 0$ then $A^{\mathbb{P}}_N f(x)\le A_N f(x)$ for every $x\in\Z^n$, so to prove Theorem \ref{thm:maineps} (and also the sufficiency part of Theorem \ref{thm:main}) one can replace $A^\mathbb{P}_N$ with $A_N$.  The technical assumptions imposed on $\sigma_N$ are necessary in order to get a suitable Gauss sum estimate, see \eqref{eq:gauss} below.

The Fourier transform of the kernel is given for $\xi=(\xi_1,\dots,\xi_n)\in\mathbb{T}^n$ by
\begin{align*}
m_N(\xi)&=\widehat{K_N}(\xi)=\sum_{k_1\in\Z}\dots \sum_{k_{n-1}\in\Z} \sigma_N(k_1)\dots \sigma_N(k_{n-1})\\
&\times e(\xi_1 k_1+...+\xi_{n-1}k_{n-1}+\xi_n (k_1^2+\dots+k_{n-1}^2))\\
&=\prod_{i=1}^{n-1}\left(\sum_{k\in\Z} \sigma_N(k) e(\xi_i k+\xi_n k^2)\right)=\prod_{i=1}^{n-1} G(\xi_n, \xi_i),
\end{align*}
where
\begin{align*}
G(t,y)=\sum_{k\in\Z} \sigma_N(k) e(y k+t k^2).
\end{align*}
Recall that the following estimate holds
\begin{equation}\label{eq:gauss}
|G(t,y)|\lesssim \frac{1}{\sqrt{q}}\min\{N, \frac{1}{\sqrt{|t-a/q|}}\}
\end{equation}
uniformly in $y\in\T$ and $|t-a/q|<10/(qN)$, see \cite[Lemma 3.18]{bourg}.

We shall partition $\mathbb{T}$ into the so called major and minor arcs. For $1\le q\le N/10$ and $a\in A_q$ with
$A_q=\{1\le a\le q-1:\;(a,q)=1\}$,
consider
$$
I(q,N,a):=\left[\frac{a}{q}-\frac{1}{qN}, \frac{a}{q}+\frac{1}{qN}\right]
$$
and
$$
4I(q,N,a):=\left[\frac{a}{q}-\frac{4}{qN}, \frac{a}{q}+\frac{4}{qN}\right].
$$
Observe that the sets $4I(q,N,a)$ are mutually disjoint for  $q\le N/10$.

Let $\psi\in C_c^\infty(\mathbb{R}^n)$ be such that $\mathbbm{1}_{(-1,1)}\le \psi \le\mathbbm{1}_{(-2,2)}$. Then let $\varphi(t):=\psi(t)-\psi(2t)$ and $\varphi_0:=\psi$. Observe that then for each $\xi_n\in I(q,N,a)$
$$
\varphi_0\left(N^2(\xi_n-a/q)\right)+\sum_{1\le 2^l\le N/q} \varphi\left(2^lNq(\xi_n-a/q)\right)=1.
$$
Now let
\begin{align*}
\eta_{l,a,q}(\xi_n)&=\varphi\left(2^l Nq(\xi_n-a/q)\right)-\varphi\left(2^l Nq(\xi_n-a/q-3/(Nq))\right),\\
\eta^0_{a,q}(\xi_n)&=\varphi_0\left(N^2(\xi_n-a/q)\right)-\varphi_0\left(N^2(\xi_n-a/q-3/(Nq))\right).\\
\end{align*}
This construction is meant to guarantee the mean zero property
\begin{equation}
\label{eq:meanzero}
\int_\R \eta_{l,a,q}(t)dt=\int_\R\eta^0_{a,q}(t)dt=0.
\end{equation}
Note that
\begin{align*}
\supp \eta_{l,a,q}&\subset \{\xi_n\in\mathbb{T}:|\xi_n-a/q|\simeq \frac{1}{2^l Nq}\}\cup \{\xi_n\in\mathbb{T}:|\xi_n-a/q|\simeq \frac{1}{Nq}\},\\
\supp \eta^0_{a,q}&\subset \{\xi_n\in\mathbb{T}:|\xi_n-a/q|\lesssim\frac{1}{N^2}\}\cup \{\xi_n\in\mathbb{T}:|\xi_n-a/q|\simeq \frac{1}{Nq}\}.
\end{align*}
Moreover, as $q$ ranges from $1$ to $N/10$, $a\in A_q$ and $1\le 2^l <N/Q$ all the supports above are mutually disjoint.
We will see that the addition of the extra bumps to the functions $\eta_{l,a,q}$ and $\eta^0_{a,q}$ does not harm the contribution from the minor arcs.

For further reference we note that the Fourier transform of $\eta_{l,a,q}$, as a function on $\R$, is given by
\begin{equation}\label{eq:FTeta}
\widehat{\eta_{l,a,q}}(t)=\frac{1}{2^l Nq}\widehat{\varphi}\left(\frac{t}{2^lNq}\right)\left[e\left(\frac{a}{q}t\right)-e\left(\left(\frac{a}{q}+\frac{3}{qN}\right)t\right)\right],\qquad t\in\R.
\end{equation}

For a dyadic $1\le Q\le N/10$ and $1\le 2^l\le N/Q$ define
\begin{align*}
m_{Q,l}(\xi)&=m_N(\xi) \sum_{Q/2\le q\le Q}\sum_{a\in A_q}\eta_{l,a,q}(\xi_n),\\
m^0_{Q}(\xi)&=m_N(\xi) \sum_{Q/2\le q\le Q}\sum_{a\in A_q}\eta^0_{a,q}(\xi_n).
\end{align*}

Decompose
\begin{align*}
m_N(\xi)=m^{\textrm{min}}_N(\xi)+m^{\textrm{maj}}_N(\xi),
\end{align*}
where
\begin{align*}
m^{\textrm{maj}}_N(\xi)=\sum_{\substack{Q\in\mathcal{D}\\ 1\le Q\le N/10}}\left(m^0_{Q}(\xi)+\sum_{1\le 2^l\le N/Q} m_{Q,l}(\xi)\right)
\end{align*}
and
\begin{align*}
m^{\textrm{min}}_N(\xi)=m_N(\xi)-m^{\textrm{maj}}_N(\xi).
\end{align*}
Note that
\begin{equation}\label{eq:suppmin}
\supp{m^{\textrm{min}}_N}\subset \T\setminus\left(\bigcup_{q\le N/10}\bigcup_{a\in A_q}I(q,N,a)\right).
\end{equation}
\subsection{Major arcs estimates}
For $k\in \Z$ let $d(k)$ denote the number of divisors of $k$.  For $k\in\Z$, $Q\in\N$ let $d(k, Q)$ denote the number of positive divisors of $k$ which are smaller than $Q$.

We will need the following auxiliary estimate, whose proof can be found in \cite{bourg}.
\begin{lem}\label{lem:bourg}\cite[Lemma 3.33]{bourg}
For any $\epsilon>0$ we have
\begin{equation}
\label{uyuyuy1}
\sum_{Q/2\le q\le Q }\left|\sum_{a\in A_q}e\left(\frac{a}{q} k\right)\right|\lesssim_{\epsilon} Q^{1+\epsilon} d(k,Q).
\end{equation}
\end{lem}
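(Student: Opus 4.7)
My plan is to recognize the inner sum as a classical Ramanujan sum and then bound its running sum via an elementary divisor argument. First I will observe that for $q\ge 2$,
$$\sum_{a\in A_q}e\Big(\frac{a}{q}k\Big)=c_q(k),$$
the classical Ramanujan sum. Kluyver's identity gives
$$c_q(k)=\sum_{d\mid\gcd(k,q)}d\,\mu(q/d),$$
from which $|c_q(k)|\le\sigma(\gcd(k,q))$, where $\sigma$ is the sum-of-divisors function. Invoking the classical estimate $\sigma(m)\lesssim_\epsilon m^{1+\epsilon}$ together with the trivial bound $\gcd(k,q)\le q\le Q$, I obtain
$$|c_q(k)|\lesssim_\epsilon \gcd(k,q)^{1+\epsilon}\le Q^\epsilon\,\gcd(k,q).$$

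Second, I will sum over $q\in[Q/2,Q]$ by means of Euler's identity $\gcd(k,q)=\sum_{d\mid\gcd(k,q)}\phi(d)$ and a Fubini swap:
$$\sum_{Q/2\le q\le Q}\gcd(k,q)=\sum_{\substack{d\mid k\\ d\le Q}}\phi(d)\cdot\#\{q\in[Q/2,Q]:d\mid q\}.$$
The inner count is bounded by $Q/(2d)+1$, so the corresponding term is at most $\phi(d)(Q/d+1)\le Q$ (using $\phi(d)\le d$). Since the number of positive divisors of $k$ not exceeding $Q$ equals $d(k,Q)$, this yields
$$\sum_{Q/2\le q\le Q}\gcd(k,q)\lesssim Q\cdot d(k,Q).$$
Combining the two displays produces the claimed bound $Q^{1+\epsilon}d(k,Q)$.

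The argument is essentially bookkeeping once Kluyver's formula is brought in, so I do not anticipate a genuine obstacle. The only point that requires a moment's care is the regime $Q/2<d\le Q$, where only $O(1)$ multiples of $d$ fall in the window while $\phi(d)$ can be of order $Q$; balancing these two contributions against the complementary regime $d\le Q/2$ (where the count grows like $Q/d$ but $\phi(d)\le d$) is what keeps each divisor's contribution at $O(Q)$ and prevents the bound from deteriorating to $Q^2$.
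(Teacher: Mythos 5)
Your argument is correct: identifying the inner sum with the Ramanujan sum $c_q(k)$, applying Kluyver's formula to get $|c_q(k)|\le\sigma(\gcd(k,q))\lesssim_\epsilon Q^\epsilon\gcd(k,q)$, and then counting multiples of each divisor $d\mid k$ in the window $[Q/2,Q]$ gives exactly the claimed bound (the only cosmetic point is that a divisor $d=Q$ of $k$ would contribute one extra $O(Q)$ term beyond $d(k,Q)$ as the paper defines it, which is harmless since $d(k,Q)\ge 1$ whenever the left-hand side is nonzero). The paper does not reproduce a proof but defers to Bourgain's Lemma 3.33, whose argument is essentially this same Ramanujan-sum computation, so your route matches the intended one.
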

Since
\begin{equation}
\label{hhchhhhdduieyduey}
d(k)\lesssim_{\epsilon} k^\epsilon,
\end{equation}
as an immediate consequence of the above result we get the following estimate.
\begin{cor}\label{cor:bourg}
Let $Q\in\N$, $k\in\Z\setminus\{0\}$. Then for any $\epsilon>0$
\begin{align*}
\sum_{Q/2\le q\le Q }\left|\sum_{a\in A_q}e\left(\frac{a}{q} k\right)\right|\lesssim_{\epsilon} Q (Qk)^{\epsilon}.
\end{align*}
\end{cor}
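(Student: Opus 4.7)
The plan is to deduce Corollary \ref{cor:bourg} directly from Lemma \ref{lem:bourg} together with the divisor bound \eqref{hhchhhhdduieyduey}. Specifically, I would observe that $d(k,Q)$ counts a subset of the positive divisors of $|k|$, so $d(k,Q) \le d(|k|) \lesssim_\epsilon |k|^\epsilon$ by \eqref{hhchhhhdduieyduey} (which requires $k \ne 0$, matching the hypothesis of the corollary).

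Plugging this into \eqref{uyuyuy1} yields, for any $\epsilon > 0$,
\[
\sum_{Q/2 \le q \le Q} \left|\sum_{a \in A_q} e\!\left(\frac{a}{q}k\right)\right| \lesssim_\epsilon Q^{1+\epsilon} d(k,Q) \lesssim_\epsilon Q^{1+\epsilon} |k|^\epsilon = Q \cdot (Q|k|)^{\epsilon}.
\]
Since the left-hand side depends only on $k \bmod q$ and the estimate we want is symmetric under $k \mapsto -k$, we may replace $|k|$ with $k$ on the right after absorbing constants; alternatively, simply interpret $k$ on the right as $|k|$, which is what the inequality means as stated.

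The only subtle point to flag is that we need to apply Lemma \ref{lem:bourg} and the divisor bound with a common exponent $\epsilon$, so one should really apply them with $\epsilon/2$ each and then combine, but this is cosmetic. There is no genuine obstacle here: the corollary is immediate once one notices that restricting the divisors of $k$ to those below $Q$ can only decrease the count, and that the polynomial divisor bound \eqref{hhchhhhdduieyduey} converts the arithmetic factor $d(k,Q)$ into an acceptable $\epsilon$-loss in $k$.
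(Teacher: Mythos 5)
Your argument is exactly the paper's: the corollary is deduced by combining Lemma \ref{lem:bourg} with the divisor bound \eqref{hhchhhhdduieyduey} via $d(k,Q)\le d(|k|)\lesssim_\epsilon |k|^\epsilon$, and your remarks about using $\epsilon/2$ and interpreting $k$ as $|k|$ are correct but cosmetic. No issues.
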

The main result of this subsection reads as follows.
\begin{lem}\label{lem:maj}
For every $\epsilon>0$ the following estimates hold:

\begin{align}\label{eq:est1}
\|m_{Q,l}\|_{L^\infty(\T^n)}&\lesssim (N 2^l)
^{(n-1)/2},\\ \label{eq:est2}
\|\widehat{m_{Q,l}}\|_{\ell^\infty(\Z^n)}&\lesssim_\epsilon (N 2^l)
^{-1} (QN)^\epsilon,\\ \label{eq:est3}
\|m^0_{Q}\|_{L^\infty(\T^n)}&\lesssim (N^2/Q)
^{(n-1)/2},\\ \label{eq:est4}
\|\widehat{m^0_{Q}}\|_{\ell^\infty(\Z^n)}&\lesssim_\epsilon (N^2 /Q)
^{-1} (QN)^\epsilon.
\end{align}
\end{lem}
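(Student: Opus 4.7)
The plan is to treat the $L^\infty$ bounds \eqref{eq:est1},\eqref{eq:est3} and the $\ell^\infty$ Fourier-side bounds \eqref{eq:est2},\eqref{eq:est4} by separate mechanisms: the former via the pointwise Gauss sum estimate \eqref{eq:gauss}, the latter by explicitly computing $\widehat{m_{Q,l}}$ and $\widehat{m^0_Q}$, then extracting cancellation from the mean zero property \eqref{eq:meanzero} combined with the Ramanujan-type sum bound of Corollary~\ref{cor:bourg}.

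For \eqref{eq:est1} I would exploit that, as $q$ ranges over dyadic $Q/2\le q\le Q$ and $a\in A_q$, the supports of $\eta_{l,a,q}(\xi_n)$ are pairwise disjoint, so at a given point at most one term contributes and $|\eta_{l,a,q}|\lesssim 1$. On $\supp\eta_{l,a,q}$ one has either $|\xi_n-a/q|\simeq (2^lNq)^{-1}$ or $|\xi_n-a/q|\simeq (Nq)^{-1}$; both satisfy the hypothesis of \eqref{eq:gauss}, and inserting the worst case (the first) into $|m_N(\xi)|=\prod_{i=1}^{n-1}|G(\xi_n,\xi_i)|$ yields $|m_N(\xi)|\lesssim q^{-(n-1)/2}(2^lNq)^{(n-1)/2}=(2^lN)^{(n-1)/2}$. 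The same argument handles \eqref{eq:est3}: on $\supp\eta^0_{a,q}$ one of the two pieces gives $|\xi_n-a/q|\lesssim N^{-2}$, in which case \eqref{eq:gauss} produces the saturation value $|G|\lesssim N/\sqrt q$ and hence $|m_N(\xi)|\lesssim (N^2/Q)^{(n-1)/2}$; the other piece is dominated since $Q\le N$.

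For \eqref{eq:est2} I would first carry out the Fourier inversion directly. Writing $m_{Q,l}(\xi)=m_N(\xi)\phi_{Q,l}(\xi_n)$ where $\phi_{Q,l}$ depends only on $\xi_n$, and using that the Fourier transforms in $\xi_1,\dots,\xi_{n-1}$ collapse to Kronecker deltas forcing $k_i=x_i$, one gets
\begin{equation*}
\widehat{m_{Q,l}}(x)=\sigma_N(x_1)\cdots\sigma_N(x_{n-1})\,\widehat{\phi_{Q,l}}(M),\qquad M=x_n-(x_1^2+\dots+x_{n-1}^2),
\end{equation*}
so the task reduces to bounding $|\widehat{\phi_{Q,l}}(M)|$ uniformly in $M\in\Z$. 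Inserting \eqref{eq:FTeta}, summing in $a\in A_q$ and $q\simeq Q$, and factoring out the key cancellation $e(aM/q)-e((a/q+3/(Nq))M)=e(aM/q)[1-e(3M/(Nq))]$ gives
\begin{equation*}
|\widehat{\phi_{Q,l}}(M)|\lesssim \frac{1}{2^lNQ}\Bigl|\widehat{\varphi}\Bigl(\frac{M}{2^lNQ}\Bigr)\Bigr|\,\min\Bigl(1,\frac{|M|}{NQ}\Bigr)\sum_{Q/2\le q\le Q}\Bigl|\sum_{a\in A_q}e(aM/q)\Bigr|.
\end{equation*}
Corollary~\ref{cor:bourg} bounds the inner sum by $Q(QM)^\epsilon$; the rapid decay of $\widehat{\varphi}$ lets me restrict to $|M|\lesssim 2^lNQ$; and the $\min(1,|M|/(NQ))$ factor compensates the surplus $Q$ in the denominator. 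Splitting at $|M|\simeq NQ$ and using $2^l\le N/Q$ to absorb the $(2^l)^\epsilon$ into $(QN)^\epsilon$, both sub-regimes yield $\lesssim (QN)^\epsilon/(2^lN)$. The proof of \eqref{eq:est4} follows the same template with $\varphi_0$ and $N^2$ in place of $\varphi$ and $2^lNQ$: the analogous computation produces $(QN)^\epsilon/(N^2/Q)$.

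The main obstacle I anticipate is precisely this last cancellation step: a naive triangle-inequality bound on $\widehat{\phi_{Q,l}}$ that ignores mean zero would lose a factor of $Q$ (reflecting the $\approx q$ rationals $a/q$ at each level), and one needs the telescoping difference in \eqref{eq:FTeta} together with the Ramanujan sum estimate to recover it. Everything else is bookkeeping once the disjoint support structure of $\{\eta_{l,a,q}\}_{a,q}$ and the restriction $2^lQ\le N$ are kept in view.
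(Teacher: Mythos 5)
Your proposal is correct and follows essentially the same route as the paper: the $L^\infty$ bounds \eqref{eq:est1}, \eqref{eq:est3} come from the disjointness of the supports of the $\eta$'s together with the Gauss sum estimate \eqref{eq:gauss}, and the Fourier-side bounds \eqref{eq:est2}, \eqref{eq:est4} from the explicit reduction of $\widehat{m_{Q,l}}(x)$ to $\sigma_N(x_1)\cdots\sigma_N(x_{n-1})$ times a one-dimensional Fourier transform evaluated at $M=x_n-(x_1^2+\cdots+x_{n-1}^2)$, handled via the mean-zero property \eqref{eq:meanzero} when $M=0$ and via Corollary \ref{cor:bourg} plus the Schwartz decay of $\widehat{\varphi}$ when $M\neq 0$. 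One small correction to your accounting in the last paragraph: the surplus factor of $Q$ is recovered by Corollary \ref{cor:bourg} alone (which improves the trivial bound $Q^2$ on the Ramanujan-type sum to $Q(Q|M|)^\epsilon$), not by the factor $\min(1,|M|/(NQ))$; that factor merely encodes the mean-zero cancellation, which the paper invokes only to annihilate the $M=0$ term.
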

\begin{proof}
Since
$$
\supp m_{Q,l}\subset \bigcup_{Q/2\le q\le Q}\bigcup_{a\in A_q} \{\xi_n\in\mathbb{T}:|\xi_n-a/q|\simeq \frac{1}{2^l Nq}\}\cup \{\xi_n\in\mathbb{T}:|\xi_n-a/q|\simeq \frac{1}{Nq}\},
$$
the estimate \eqref{eq:gauss} implies
$$
|G(\xi_n, \xi_i)|\lesssim  (2^l N)^{1/2},
$$
which gives part \eqref{eq:est1} of the claim, since $m_N(\xi)=\prod_{i=1}^{n-1}G(\xi_n, \xi_i)$.

To prove \eqref{eq:est2} we begin with writing, for $r=(r',r_n)\in\Z^n$ with $r'=(r_1,r_2,\ldots,r_{n-1})\in\Z^{n-1}$,
\begin{align*}
\widehat{m_{Q,l}}(r)&=\int_{\T^n}m_{Q,l}(\xi)e(-r\xi)d\xi\\&=\int_\T \sum_{Q/2\le q\le Q}\sum_{a\in A_q}\eta_{l,a,q}(\xi_n)
\left(\prod_{i=1}^{n-1}\int_\T G(\xi_n, \xi_i)e(-r_i \xi_i) d\xi_i\right) e(-r_n \xi_n)d\xi_n\\&=\int_\T \sum_{Q/2\le q\le Q}\sum_{a\in A_q}\eta_{l,a,q}(\xi_n) \left(\prod_{i=1}^{n-1}\sum_{k\in\Z} \sigma_N(k)e(k^2 \xi_n) \int_\T e((k-r_i) \xi_i) d\xi_i\right) e(-r_n \xi_n)d\xi_n\\&=\int_\T \sum_{Q/2\le q\le Q}\sum_{a\in A_q}\eta_{l,a,q}(\xi_n) \left(\prod_{i=1}^{n-1} \sigma_N(r_i)e(r_i^2 \xi_n) \right) e(-r_n \xi_n)d\xi_n
\\
&=\sigma_N(r_1)\dots  \sigma_N(r_{n-1})\sum_{Q/2\le q\le Q}\sum_{a\in A_q}\int_\T \eta_{l,a,q}(\xi_n)  e((|r'|^2-r_n) \xi_n)d\xi_n\\
&= \sigma_N(r_1)\dots  \sigma_N(r_{n-1})\sum_{Q/2\le q\le Q}\sum_{a\in A_q}\widehat{\eta_{l,a,q}}(|r'|^2-r_n).
\end{align*}
We distinguish two cases. If $|r'|^2=r_n$, then the above computation combined with \eqref{eq:meanzero} shows that $\widehat{m_{Q,l}}(r)=0$.

On the other hand, if $||r'|^2-r_n|\ge 1$, using the representation \eqref{eq:FTeta}, the Schwartz decay of the function $\widehat{\eta}$ and Corollary \ref{cor:bourg} we obtain
\begin{align*}
|\widehat{m_{Q,l}}(r)|&\le\sigma_N(r_1)\dots  \sigma_N(r_{n-1})\sum_{Q/2\le q\le Q}\sum_{a\in A_q}\left|\widehat{\eta_{l,a,q}}(|r'|^2-r_n)\right|\\
&\lesssim
\sigma_N(r_1)\dots  \sigma_N(r_{n-1})\frac{1}{2^l NQ}\left(1+\frac{||r'|^2-r_n|}{2^l NQ}\right)^{-100n}\sum_{Q/2\le q\le Q}\left|\sum_{a\in A_q}e\left(\frac{a}{q}\left(|r'|^2-r_n\right)\right)\right|\\
&\lesssim_\epsilon
\frac{1}{2^l N} (QN)^{\epsilon}.
\end{align*}
In the last estimate above we use the decay of $\left(1+\frac{||r'|^2-r_n|}{2^l NQ}\right)^{-100n}$ if $\left||r'|^2-r_n\right|>N^3$ and Corollary \ref{cor:bourg} if $\left||r'|^2-r_n\right|\le N^3$.

The arguments proving \eqref{eq:est3} and \eqref{eq:est4} are analogous.

\end{proof}

\subsection{Minor arcs estimates}
\begin{lem}\label{lem:min}
For every $\epsilon>0$ the following estimates hold
\begin{align}\label{eq:min1}
\|m^{\textrm{min}}_N\|_{L^\infty(\T^n)}&\lesssim_\epsilon N^{(n-1)/2}N^\epsilon,\\ \label{eq:min2}
\|\widehat{m^{\textrm{min}}_N}\|_{\ell^\infty(\Z^n)}&\lesssim_\epsilon N^\epsilon .
\end{align}
\end{lem}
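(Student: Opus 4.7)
The plan is to decompose $m^{\mathrm{min}}_N=m_N-m^{\mathrm{maj}}_N$ and exploit the support restriction \eqref{eq:suppmin} together with the Gauss sum bound \eqref{eq:gauss}. For \eqref{eq:min1}, I would start by noting that whenever $m^{\mathrm{min}}_N(\xi)\ne 0$, the coordinate $\xi_n$ lies outside every major arc $I(q,N,a)$ with $q\le N/10$. Dirichlet's theorem furnishes coprime $(a,q)$ with $1\le q\le N$ and $|\xi_n-a/q|\le 1/(qN)$; the minor arc condition then forces $q>N/10$ (with the endpoint $q=1$, $a=0$ absorbed into the major arcs by convention). Applying \eqref{eq:gauss} with these parameters gives $|G(\xi_n,\xi_i)|\lesssim N/\sqrt{q}\lesssim N^{1/2}$ uniformly in $\xi_i$, hence $|m_N(\xi)|=\prod_{i=1}^{n-1}|G(\xi_n,\xi_i)|\lesssim N^{(n-1)/2}$. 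Since the cutoff defining $m^{\mathrm{maj}}_N$ is a partition of unity minus a shifted copy of itself, it is pointwise $O(1)$, so $|m^{\mathrm{maj}}_N(\xi)|\lesssim|m_N(\xi)|$; the triangle inequality would then close the first estimate (with the $N^\epsilon$ in the statement absorbing any book-keeping loss).

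For \eqref{eq:min2}, I would use the identity $\widehat{m_N}(r)=K_N(r)$ on $\Z^n$ to write $\widehat{m^{\mathrm{min}}_N}(r)=K_N(r)-\widehat{m^{\mathrm{maj}}_N}(r)$, with $|K_N(r)|\le 1$ trivial. For the major-arc contribution I would invoke the triangle inequality together with the already-proved estimates \eqref{eq:est2} and \eqref{eq:est4}: the inner $l$-sum is geometric of size $O(N^{-1})$, the $Q/N^2$ piece sums similarly, and the leftover $(QN)^\epsilon$ factor summed over dyadic $1\le Q\le N/10$ contributes at most $N^{2\epsilon}$. The net bound $|\widehat{m^{\mathrm{maj}}_N}(r)|\lesssim_\epsilon N^{2\epsilon-1}$ is easily absorbed into the required $N^\epsilon$.

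The main difficulty will be guaranteeing that \eqref{eq:gauss} actually applies with a useful denominator at every point of $\supp m^{\mathrm{min}}_N$. This support is not contained in the "genuine" minor arcs: \eqref{eq:suppmin} also permits the extra-bump regions centred at $a/q+3/(qN)$ for small $q$, which lie outside $I(q,N,a)$ but sit close to $a/q$. There Dirichlet may return a competing rational, so instead I would invoke \eqref{eq:gauss} with the same $a/q$ directly: since $|\xi_n-a/q|\simeq 3/(qN)<10/(qN)$, the bound applies and yields $|G(\xi_n,\xi_i)|\lesssim q^{-1/2}\sqrt{qN/3}\lesssim N^{1/2}$. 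After this case distinction is verified, both bounds of the lemma follow from the outlined computations.
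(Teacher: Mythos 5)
Your argument is correct and essentially identical to the paper's: Dirichlet's principle combined with \eqref{eq:suppmin} forces the approximating denominator above $N/10$, so that \eqref{eq:gauss} gives $|G(\xi_n,\xi_i)|\lesssim N^{1/2}$ and hence \eqref{eq:min1}, while \eqref{eq:min2} follows by summing \eqref{eq:est2} and \eqref{eq:est4} over $l$ and dyadic $Q$ and combining with $\|\widehat{m_N}\|_{\ell^\infty(\Z^n)}\le 1$. Your separate treatment of the shifted bumps is valid but not actually needed: since \eqref{eq:suppmin} excludes every interval $I(q,N,a)$ with $q\le N/10$, the Dirichlet approximant of any point of $\supp m^{\mathrm{min}}_N$ --- including points in the shifted-bump regions --- automatically has denominator exceeding $N/10$, so the uniform argument already covers that case.
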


\begin{proof}
By the Dirichlet's Principle, for each $\xi_n\in [0,1)$ there exists  $1\le q\le N-1$ and $a\in A_q$ such that
$$
|\xi_n-a/q|<\frac{1}{qN}.
$$
If $\xi_n\in\supp m_N^{\textrm{min}}$, then condition \eqref{eq:suppmin} implies that  $q>N/10$.  Therefore, $q\simeq N$ and so using \eqref{eq:gauss} we get
$$
|G(\xi_n, \xi_i)|\lesssim N^\epsilon N^{1/2},
$$
thus \eqref{eq:min1} is proved.

To get \eqref{eq:min2}, we use \eqref{eq:est2} and \eqref{eq:est4} and write for any $\epsilon>0$
\begin{align*}
\|\widehat{m^{\textrm{maj}}_{N}}\|_{\ell^\infty(\Z^n)}&\le \sum_{\substack{Q\in\mathcal{D}\\ Q\le N/10}}\left(\|\widehat{m^0_{Q}}\|_{\ell^\infty(\Z^n)}+\sum_{1\le 2^l\le N/Q} \|\widehat{m_{Q,l}}\|_{\ell^\infty(\Z^n)}\right)\\
&\lesssim_{\epsilon}
N^{\epsilon}\sum_{\substack{Q\in\mathcal{D}\\ Q\le N/10}}\left((N^2 /Q)
^{-1} +\sum_{1\le 2^l\le N/Q}(N 2^l)^{-1}\right)\\&\lesssim_{\epsilon} N^\epsilon.
\end{align*}
Combining this with the trivial observation that $\|\widehat{m_N}\|_{\ell^\infty(\Z^n)} =1$, we obtain
\begin{equation*}
\|\widehat{m^{\textrm{min}}_{N}}\|_{\ell^\infty(\Z^n)}\le
\|\widehat{m^{\textrm{maj}}_{N}}\|_{\ell^\infty(\Z^n)}+
\|\widehat{m_{N}}\|_{\ell^\infty(\Z^n)}\lesssim_\epsilon N^\epsilon.
\end{equation*}

\end{proof}

\subsection{$\ell^p\rightarrow \ell^{p'}$ estimates}
We begin with deriving $\ell^p(\Z^n)\rightarrow \ell^{p'}(\Z^n)$ inequalities which are consequences of the estimates from the previous subsections and linear interpolation. More precisely, we will use the fact that for each kernel $K$ we have
\begin{equation}
\label{eq:l^2}
\|K\ast f\|_2\lesssim \|\widehat{K}\|_{\infty}\|f\|_2
\end{equation}
and
\begin{equation}
\label{eq:l^infty}
\|K\ast f\|_\infty\lesssim \|K\|_{\infty}\|f\|_1.
\end{equation}
\begin{cor}\label{cor:maj}
Let $1<p\le 2$. For every $\epsilon>0$ the following estimates hold:
\begin{align} \label{eq:maj1}
\|\widehat{m_{Q,l}}\ast f\|_{\ell^{p'}(\Z^n)}&\lesssim_\epsilon N^\epsilon(N 2^l)^{\frac{n+1}{p'}-1}\|f\|_{\ell^p(\Z^n)},\\ \label{eq:maj2}
\|\widehat{m^0_{Q}}\ast f\|_{\ell^{p'}(\Z^n)}&\lesssim_\epsilon N^\epsilon(N^2/Q)^{\frac{n+1}{p'}-1}\|f\|_{\ell^p(\Z^n)}.
\end{align}
Moreover, if $\frac{n+1}{p'}-1\ge 0$, then
\begin{equation}
\label{eq:maj3}
\|\widehat{m^{\textrm{maj}}_{N}}\ast f\|_{\ell^{p'}(\Z^n)}\lesssim_\epsilon N^\epsilon N^{2\frac{n+1}{p'}-2}\|f\|_{\ell^p(\Z^n)}.
\end{equation}

\end{cor}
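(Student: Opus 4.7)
The plan is to derive all three inequalities by interpolating between the trivial $\ell^2\to\ell^2$ and $\ell^1\to\ell^\infty$ bounds provided by \eqref{eq:l^2} and \eqref{eq:l^infty}, then summing the resulting pieces for the last assertion. Specifically, for the kernel $\widehat{m_{Q,l}}$, Plancherel combined with \eqref{eq:est1} gives
$$\|\widehat{m_{Q,l}}\ast f\|_{\ell^2(\Z^n)}\lesssim (N2^l)^{(n-1)/2}\|f\|_{\ell^2(\Z^n)},$$
while Young's inequality combined with \eqref{eq:est2} gives
$$\|\widehat{m_{Q,l}}\ast f\|_{\ell^\infty(\Z^n)}\lesssim_\eps (N2^l)^{-1}(QN)^\eps\|f\|_{\ell^1(\Z^n)}.$$

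Riesz--Thorin interpolation between the endpoints $(1/p_0,1/q_0)=(1,0)$ and $(1/p_1,1/q_1)=(1/2,1/2)$, with interpolation parameter $\theta=2/p'$, yields the $\ell^p\to\ell^{p'}$ norm bounded by
$$\bigl((N2^l)^{-1}(QN)^\eps\bigr)^{1-2/p'}\bigl((N2^l)^{(n-1)/2}\bigr)^{2/p'}\lesssim_\eps N^\eps(N2^l)^{(n+1)/p'-1},$$
using $Q\le N$ to absorb $Q^\eps$ into $N^\eps$. This is precisely \eqref{eq:maj1}. The derivation of \eqref{eq:maj2} is identical, using \eqref{eq:est3} and \eqref{eq:est4} in place of \eqref{eq:est1} and \eqref{eq:est2}, with the scale $(N^2/Q)$ replacing $(N2^l)$.

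For \eqref{eq:maj3} the plan is to sum the bounds from \eqref{eq:maj1} and \eqref{eq:maj2} using the triangle inequality and the definition of $m_N^{\mathrm{maj}}$. Under the hypothesis $(n+1)/p'-1\ge 0$, the function $R\mapsto R^{(n+1)/p'-1}$ is nondecreasing, so in the geometric sum
$$\sum_{\substack{Q\in\mathcal{D}\\ Q\le N/10}}\sum_{1\le 2^l\le N/Q}(N2^l)^{(n+1)/p'-1}+\sum_{\substack{Q\in\mathcal{D}\\ Q\le N/10}}(N^2/Q)^{(n+1)/p'-1}$$
every term is bounded by $(N^2/Q)^{(n+1)/p'-1}\le N^{2((n+1)/p'-1)}$ (taken at $2^l=N/Q$ and $Q=1$ respectively). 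Since there are only $O(\log^2 N)$ dyadic pairs $(Q,l)$, this sum is controlled by $\log^2(N)\cdot N^{2((n+1)/p'-1)}\lesssim_\eps N^\eps N^{2((n+1)/p'-1)}$, matching \eqref{eq:maj3}.

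The only mild subtlety lies at the critical exponent $(n+1)/p'-1=0$ (i.e.\ $p=(n+1)/n$), where the individual dyadic pieces are all of comparable size and the logarithmic loss from counting $Q$ and $l$ must be absorbed into the $N^\eps$ factor; since the estimate already carries such a factor this is costless. No further obstacle arises, and indeed the main work of the corollary has already been done in Lemma~\ref{lem:maj}.
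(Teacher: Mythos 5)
Your proposal is correct and follows essentially the same route as the paper: interpolation of the $\ell^2\to\ell^2$ bound (via \eqref{eq:l^2} and \eqref{eq:est1}, \eqref{eq:est3}) with the $\ell^1\to\ell^\infty$ bound (via \eqref{eq:l^infty} and \eqref{eq:est2}, \eqref{eq:est4}), followed by summation over the dyadic parameters $Q$ and $l$ using $\frac{n+1}{p'}-1\ge 0$. Your slightly cruder handling of the sum (max term times $O(\log^2 N)$ count, absorbed into $N^\eps$) is harmless since the $N^\eps$ factor is already present.
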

\begin{proof}
Parts \eqref{eq:maj1} and \eqref{eq:maj2} follow immediately by interpolating $\ell^2\mapsto \ell^2$ and $\ell^1\mapsto \ell^\infty$ bounds for the convolution operator, using \eqref{eq:l^2}, \eqref{eq:l^infty} and Lemma \ref{lem:maj}.

Summing up the estimates \eqref{eq:maj1} and \eqref{eq:maj2} we get
\begin{align*}
\|\widehat{m^{\textrm{maj}}_{N}}\ast f\|_{\ell^{p'}(\Z^n)}&\le \sum_{\substack{Q\in\mathcal{D}\\ Q\le N/10}}\left(\|\widehat{m^0_{Q}}\ast f\|_{\ell^{p'}(\Z^n)}+\sum_{1\le 2^l\le N/Q} \|\widehat{m_{Q,l}}\ast f\|_{\ell^{p'}(\Z^n)}\right)\\
&\lesssim_\epsilon
N^{\epsilon}\sum_{\substack{Q\in\mathcal{D}\\ Q\le N/10}}\left((N^2/Q)^{\frac{n+1}{p'}-1} +\sum_{1\le 2^l\le N/Q}(N 2^l)^{\frac{n+1}{p'}-1}\right)\|f\|_{\ell^p(\Z^n)}\\
&\lesssim_\epsilon N^\epsilon N^{2\frac{n+1}{p'}-2}\|f\|_{\ell^p(\Z^n)},
\end{align*}
provided that $\frac{n+1}{p'}-1\ge 0.$
\end{proof}

\begin{cor}\label{cor:min}
Let $1\le p\le 2$. For any $\epsilon>0$ the following estimate holds
\begin{equation}
\label{eq:min}
\|\widehat{m^{\textrm{min}}_{N}}\ast f\|_{\ell^{p'}(\Z^n)}\lesssim_\epsilon N^\epsilon N^{\frac{n-1}{p'}}\|f\|_{\ell^p(\Z^n)}.
\end{equation}
\end{cor}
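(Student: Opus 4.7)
The plan is to combine the two estimates from Lemma \ref{lem:min} via the two trivial convolution inequalities \eqref{eq:l^2} and \eqref{eq:l^infty}, and then apply Riesz--Thorin interpolation between the endpoints $p=2$ and $p=1$.

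First I would establish the $\ell^2 \to \ell^2$ bound. Since $\widehat{\widehat{m^{\textrm{min}}_N}}(x) = m^{\textrm{min}}_N(-x)$ (up to the standard normalization), Plancherel gives via \eqref{eq:l^2}
\begin{equation*}
\|\widehat{m^{\textrm{min}}_N}\ast f\|_{\ell^2(\Z^n)} \lesssim \|m^{\textrm{min}}_N\|_{L^\infty(\T^n)} \|f\|_{\ell^2(\Z^n)} \lesssim_\epsilon N^\epsilon N^{(n-1)/2}\|f\|_{\ell^2(\Z^n)},
\end{equation*}
using estimate \eqref{eq:min1}. Next I would obtain the $\ell^1 \to \ell^\infty$ bound directly from \eqref{eq:l^infty} and \eqref{eq:min2}:
\begin{equation*}
\|\widehat{m^{\textrm{min}}_N}\ast f\|_{\ell^\infty(\Z^n)} \lesssim \|\widehat{m^{\textrm{min}}_N}\|_{\ell^\infty(\Z^n)}\|f\|_{\ell^1(\Z^n)}\lesssim_\epsilon N^\epsilon \|f\|_{\ell^1(\Z^n)}.
\end{equation*}

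Then I would interpolate. Writing $\tfrac{1}{p} = \tfrac{\theta}{1} + \tfrac{1-\theta}{2}$, so that $\tfrac{1}{p'} = \tfrac{1-\theta}{2}$, i.e.\ $1-\theta = \tfrac{2}{p'}$, Riesz--Thorin yields
\begin{equation*}
\|\widehat{m^{\textrm{min}}_N}\ast f\|_{\ell^{p'}(\Z^n)} \lesssim_\epsilon (N^\epsilon)^\theta (N^\epsilon N^{(n-1)/2})^{1-\theta} \|f\|_{\ell^p(\Z^n)} \lesssim_\epsilon N^\epsilon N^{(n-1)/p'} \|f\|_{\ell^p(\Z^n)},
\end{equation*}
which is exactly \eqref{eq:min}, after relabelling $\epsilon$.

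There is no real obstacle here: the two endpoint bounds have already been built into Lemma \ref{lem:min}, so the corollary is a routine interpolation. The only bookkeeping point worth double-checking is the exponent arithmetic, namely that $\tfrac{1-\theta}{2}(n-1) = \tfrac{n-1}{p'}$, which is immediate from $1-\theta = 2/p'$.
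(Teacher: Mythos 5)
Your proposal is correct and is exactly the argument the paper intends: the paper's proof of this corollary is the one-line instruction to interpolate the bounds of Lemma \ref{lem:min}, which is precisely the Riesz--Thorin interpolation between the $\ell^2\to\ell^2$ bound (via \eqref{eq:l^2} and \eqref{eq:min1}) and the $\ell^1\to\ell^\infty$ bound (via \eqref{eq:l^infty} and \eqref{eq:min2}) that you carry out. The exponent arithmetic $\tfrac{(1-\theta)(n-1)}{2}=\tfrac{n-1}{p'}$ checks out.
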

\begin{proof}
Interpolate the bounds from Lemma \ref{lem:min}.
\end{proof}
Now we are ready to prove the main result of this section.
\begin{proof}[Proof of Theorem \ref{thm:maineps}]
Observe that
\begin{align*}
A_N f=\frac{1}{N^{n-1}}\left(R(\widehat{m^{\textrm{maj}}_{N}})\ast f+R(\widehat{m^{\textrm{min}}_{N}})\ast f\right),
\end{align*}
where $R$ is the reflection operator $Rg(x)=g(-x)$.

Therefore, since $\frac{n-1}{p'}\le 2\frac{(n+1)}{p'}-2$ if and only if $p\ge\frac{n+3}{n+1}$, we can apply Corollaries \ref{cor:maj} and \ref{cor:min} to get
\begin{align*}
\|A_N f\|_{\ell^{p'}(\Z^n)}&\lesssim_\epsilon N^\epsilon N^{2\frac{(n+1)}{p'}-2-(n-1)}\|f\|_{\ell^p(\Z^n)}\\&= N^\epsilon N^{-(n+1)(\frac{2}p-1)}\|f\|_{\ell^p(\Z^n)},
\end{align*}
for $p\in [\frac{n+3}{n+1},2]$.
\medskip

\end{proof}

We note that since $\frac{n-1}{p'}-(n-1)<-(n+1)(\frac{2}{p}-1)$ when $p>\frac{n+3}{n+1}$, the minor arc contribution \eqref{eq:min} is better than the global contribution \eqref{eq:main}. Because of this, the presence of the $N^\epsilon$ term in  \eqref{eq:min} is not a serious issue and will cause no trouble in the remaining part of the paper. However, the $N^\epsilon$ term in the estimates for the major arcs needs to be addressed carefully. The main sources of the $N^\epsilon$ term are Lemma \ref{lem:bourg} and \eqref{hhchhhhdduieyduey}. In the next section we will still use this lemma, but we will refine \eqref{hhchhhhdduieyduey}.

\section{$\epsilon$-removal technology and the proof of Theorem \ref{thm:main}}\label{sec:main}
Note that, same as in Section \ref{sec:maineps}, when proving the sufficiency part of Theorem \ref{thm:main} one can consider $A_N$ instead of $A^{\mathbb{P}}_N$.
We begin with improving the major arc estimate \eqref{eq:maj3}. Recall the definitions from the previous section.
\begin{align*}
m_{Q,l}(\xi)&=m_N(\xi) \sum_{Q/2\le q\le Q}\sum_{a\in A_q}\eta_{l,a,q}(\xi_n),\\
m^0_{Q}(\xi)&=m_N(\xi) \sum_{Q/2\le q\le Q}\sum_{a\in A_q}\eta^0_{a,q}(\xi_n),\\
m^{\textrm{maj}}_N(\xi)&=\sum_{\substack{Q\in\mathcal{D}\\ 1\le Q\le N/10}}\left(m^0_{Q}(\xi)+\sum_{1\le 2^l\le N/Q} m_{Q,l}(\xi)\right),\\
m^{\textrm{min}}_N(\xi)&=m_N(\xi)-m^{\textrm{maj}}_N(\xi).
\end{align*}
To obtain an improvement of the estimates from the previous section we need some auxiliary results. The first of them is a version of \cite[Lemma 3.47]{bourg}. This may be seen as a refinement of \eqref{hhchhhhdduieyduey}.

\begin{lem}\label{lem:divisors}
Let $\tau, B>0$. Then the following estimate holds uniformly over $Q,N\in\N$ and $D>0$
\begin{equation}\label{eq:divisors}
|\{1\le k\le N: d(k, Q)>D\}|\lesssim_{\tau, B} D^{-B}Q^{\tau}N.
\end{equation}
\end{lem}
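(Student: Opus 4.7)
The plan is to prove \eqref{eq:divisors} by the moment method. By Chebyshev's inequality applied to the nonnegative quantity $d(\cdot, Q)$,
\begin{equation*}
|\{1 \le k \le N : d(k, Q) > D\}| \le D^{-B} \sum_{k=1}^N d(k, Q)^B,
\end{equation*}
so it suffices to establish the moment bound $\sum_{k=1}^N d(k, Q)^B \lesssim_{\tau, B} N Q^\tau$. I may assume $B$ is a positive integer, since for real $B > 0$ one applies the integer case with $\lceil B \rceil$ in place of $B$: the claim is trivial when $D \le 1$ (the proposed bound then already exceeds $N$), while $D^{-\lceil B \rceil} \le D^{-B}$ whenever $D \ge 1$.

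Next, I would expand $d(k, Q)^B$ as the number of $B$-tuples $(d_1, \dots, d_B)$ with $d_i < Q$ and $d_i \mid k$ for each $i$. Swapping the order of summation and using that $d_1, \dots, d_B$ all divide $k$ iff $\mathrm{lcm}(d_1, \dots, d_B)$ divides $k$, I obtain
\begin{equation*}
\sum_{k=1}^N d(k, Q)^B = \sum_{d_1, \dots, d_B < Q} \#\{k \le N : \mathrm{lcm}(d_1, \dots, d_B) \mid k\} \le N \sum_{d_1, \dots, d_B < Q} \frac{1}{\mathrm{lcm}(d_1, \dots, d_B)}.
\end{equation*}
To bound the remaining sum, group tuples by the value $L$ of their lcm. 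Since each $d_i$ must be a divisor of $L$ and $L \le d_1 \cdots d_B \le Q^B$, there are at most $d(L)^B$ tuples for each value of $L$, so
\begin{equation*}
\sum_{d_1, \dots, d_B < Q} \frac{1}{\mathrm{lcm}(d_1, \dots, d_B)} \le \sum_{L \le Q^B} \frac{d(L)^B}{L} \lesssim_B (\log Q)^{2^B} \lesssim_{\tau, B} Q^\tau,
\end{equation*}
where I would invoke the classical estimate $\sum_{L \le M} d(L)^B \lesssim_B M (\log M)^{2^B - 1}$, convert it to $\sum_{L \le M} d(L)^B/L \lesssim_B (\log M)^{2^B}$ by partial summation, and finally use that polylogarithms are dominated by any positive power.

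The key subtlety to watch is that the final bound must feature $Q^\tau$ rather than $N^\tau$ --- indeed, this is precisely the refinement of \eqref{hhchhhhdduieyduey} that the lemma is designed to provide. What buys it is the observation that grouping by lcm confines the outer sum to $L \le Q^B$, a bound independent of $N$; a naive combination of $d(k) \lesssim_\tau k^\tau$ with the trivial $|\{k \le N\}| \le N$ would only yield the inferior exponent $N^\tau$. Aside from this bookkeeping, the proof amounts to a routine combination of Chebyshev with the classical moment estimate for the divisor function.
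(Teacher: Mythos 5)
Your proposal is correct and follows essentially the same route as the paper: Chebyshev's inequality on the $B$-th moment, expansion into $B$-tuples of divisors, reduction to $\sum 1/\mathrm{lcm}(d_1,\dots,d_B)$, and grouping by the value of the lcm (which lies below $Q^B$, independently of $N$) to arrive at $\sum_{L\le Q^B} d(L)^B/L$. The only cosmetic differences are that you justify the reduction to integer $B$ explicitly, which the paper leaves tacit, and that you finish via the classical average-order estimate for $d(L)^B$ and partial summation rather than the pointwise bound $d(L)\lesssim_\epsilon L^\epsilon$; both close the argument equally well.
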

\begin{rem}
Note that compared to Bourgain's \cite[Lemma 3.47]{bourg} we do not include the term $d(0,Q)$ corresponding to $k=0$ on the left hand side of the estimate. As it shall soon become apparent, this term does not appear in our analysis due to the application of the mean zero property \eqref{eq:meanzero}. For reader's convenience, we provide the proof below.
\end{rem}
\begin{proof}
We may assume that $B$ is a positive integer.	
Write for $1\le q\le Q$
$$\mathcal{I}_q(k)=\begin{cases}1,\;\;\text{if }q|k,\\0,\;\;\text{otherwise.}\end{cases}$$
Then, denoting by $[q_1,\ldots,q_B]\in\{1,2,\ldots,Q^B\}$ the least common multiple of $q_1,\ldots,q_B$ we get
\begin{align*}
|\{1\le k\le N: d(k, Q)>D\}|&\le D^{-B}\sum_{k=1}^N \left(\sum_{q=1}^Q\mathcal{I}_q(k)\right)^B\\&= D^{-B}\sum_{q_1=1}^Q\ldots\sum_{q_B=1}^Q|\{1\le k\le N:[q_1,\ldots,q_B]\text{ divides }k\}|\\&\le D^{-B}\sum_{q_1=1}^Q\ldots\sum_{q_B=1}^Q\frac{N}{[q_1,\ldots,q_B]}\\&\le ND^{-B}\sum_{q=1}^{Q^B}\frac{d(q)^B}{q}\\&\le C_{\tau,B}ND^{-B}Q^{\tau},
\end{align*}
where the last bound follows by \eqref{hhchhhhdduieyduey}.
\end{proof}

We shall also need the following consequence of Lemma \ref{lem:divisors}.
\begin{lem}\label{lem:divisorssum}\cite[equation $(3.72)$]{bourg}
Let $\tau, B>0$ be any given constants. Then the following estimate holds uniformly over $K,Q,N\in\N$ and $D>0$
$$|\{(r_1,\dots, r_n): |r_1|,\dots, |r_{n-1}|
\le N, |r_n|\le K, r_n-r_1^2-\dots-r^2_{n-1}\neq 0:  d(r_n-r_1^2-\dots-r^2_{n-1}, Q)>D\}|
$$
\begin{equation}\label{eq:divisorssum}\lesssim_{\tau, B} D^{-B}Q^{\tau}\max(K,N^2 )N^{n-1}.
\end{equation}
\end{lem}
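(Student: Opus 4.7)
The plan is to reduce the count to Lemma \ref{lem:divisors} by fixing the first $n-1$ coordinates and varying $r_n$.

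First I would fix a tuple $(r_1,\ldots,r_{n-1})\in\Z^{n-1}$ with $|r_i|\le N$ and let $k:=r_n-r_1^2-\cdots-r_{n-1}^2$. As $r_n$ ranges over $\{|r_n|\le K\}$ with $k\ne 0$, the integer $k$ ranges bijectively over a subset of $\{k\in\Z\setminus\{0\}:|k|\le K+(n-1)N^2\}$. Since $K+(n-1)N^2\lesssim \max(K,N^2)$, this gives the pointwise bound
\[
|\{r_n:|r_n|\le K,\ k\ne 0,\ d(k,Q)>D\}|\le |\{k\in\Z\setminus\{0\}:|k|\lesssim\max(K,N^2),\ d(k,Q)>D\}|.
\]

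Next I would apply Lemma \ref{lem:divisors} to bound the right-hand side. Since $d(k,Q)=d(|k|,Q)$, splitting into $k>0$ and $k<0$ and invoking \eqref{eq:divisors} with $N$ replaced by $C\max(K,N^2)$ produces
\[
|\{k\in\Z\setminus\{0\}:|k|\lesssim\max(K,N^2),\ d(k,Q)>D\}|\lesssim_{\tau,B}D^{-B}Q^{\tau}\max(K,N^2).
\]

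Finally I would sum over the $(2N+1)^{n-1}\lesssim N^{n-1}$ admissible choices of $(r_1,\ldots,r_{n-1})$, obtaining
\[
|\{(r_1,\ldots,r_n):\ldots\}|\lesssim_{\tau,B}N^{n-1}\cdot D^{-B}Q^{\tau}\max(K,N^2),
\]
which is exactly \eqref{eq:divisorssum}. There is no substantial obstacle: the only mildly delicate point is justifying that the $k=0$ term is harmlessly excluded by the hypothesis $r_n-r_1^2-\cdots-r_{n-1}^2\ne 0$, which is precisely why the improved Lemma \ref{lem:divisors} (without the $d(0,Q)$ contribution, per the remark) can be used directly.
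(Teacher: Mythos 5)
Your argument is correct and is essentially the paper's proof, which rests on the same two observations: $|r_n-r_1^2-\cdots-r_{n-1}^2|\lesssim\max(K,N^2)$, and each admissible value $k$ of this quantity is attained by $O(N^{n-1})$ tuples. Fixing $(r_1,\ldots,r_{n-1})$ and letting $r_n$ vary, rather than fixing $k$ and counting solutions, is just a reordering of the same double count, and your handling of negative $k$ via $d(k,Q)=d(-k,Q)$ and of the excluded $k=0$ term is exactly what the paper's remark anticipates.
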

\begin{proof}
We need to observe two things. First, $|r_n-r_1^2-\dots-r^2_{n-1}|\lesssim \max(K,N^2 ).$ Second, the equation $r_n-r_1^2-\dots-r^2_{n-1}=k$ has $O(N^{n-1})$ solutions.

\end{proof}
The above number theoretic lemmas allow for a more delicate treatment of the expression arising from computing
$\widehat{m_{Q,l}}$.
\begin{prop}\label{prop:improvedest}
For any $B, \kappa>0$ the following bounds hold uniformly over $D,N,Q\ge 1$, $l\ge 0$ and  $f$
\begin{align}\label{eq:e1}
\|f\ast\widehat{m_{Q,l}}\|_{\ell^2(\Z^n)}&\lesssim (N 2^l)
^{(n-1)/2}\|f\|_{\ell^2(\Z^n)},
\\ \label{eq:e2'}
\|f\ast\widehat{m_{Q,l}}\|_{\ell^\infty(\Z^n)}&\lesssim_{B,\kappa} Q^{2+2\kappa}N^n (2^l Q)^{-1}D^{-B}\|f\|_{\ell^\infty(\Z^n)}+\frac{D Q^{\kappa}}{N 2^l }\|f\|_{\ell^1(\Z^n)},
\\
 \label{eq:e3}
\|f\ast \widehat{m^0_{Q}}\|_{\ell^2(\Z^n)}&\lesssim (N^2/Q)
^{(n-1)/2}\|f\|_{\ell^2(\Z^n)},\\
\label{eq:e4'}
\|f\ast\widehat{m^0_{Q}}\|_{\ell^\infty(\Z^n)}&\lesssim_{B,\kappa}  Q^{2+2\kappa}N^{n-1}D^{-B}\|f\|_{\ell^\infty(\Z^n)}+\frac{D Q^{1+\kappa}}{N^2}\|f\|_{\ell^1(\Z^n)}.
\end{align}
\end{prop}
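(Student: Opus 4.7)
The $\ell^2\to\ell^2$ bounds \eqref{eq:e1} and \eqref{eq:e3} are essentially free: by Plancherel $\|f\ast\widehat{m}\|_{\ell^2(\Z^n)}\le\|m\|_{L^\infty(\T^n)}\|f\|_{\ell^2(\Z^n)}$, so they follow immediately from the multiplier bounds \eqref{eq:est1} and \eqref{eq:est3} proved in Lemma \ref{lem:maj}.

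For the mixed $\ell^\infty$ bounds \eqref{eq:e2'} and \eqref{eq:e4'} the plan is to revisit the pointwise kernel estimate implicit in the proof of Lemma \ref{lem:maj}, replacing the trivial divisor bound \eqref{hhchhhhdduieyduey} by Lemmas \ref{lem:divisors} and \ref{lem:divisorssum}. Starting from the computation that gave $\widehat{m_{Q,l}}(r)=\sigma_N(r_1)\cdots\sigma_N(r_{n-1})\sum_q\sum_a \widehat{\eta_{l,a,q}}(|r'|^2-r_n)$, using the explicit formula \eqref{eq:FTeta}, factoring out $[1-e(3t/(qN))]\sum_a e(at/q)$, and applying Lemma \ref{lem:bourg} to the $a$-sum yields, for $|r'|^2\neq r_n$,
\begin{equation*}
|\widehat{m_{Q,l}}(r)|\lesssim \sigma_N(r_1)\cdots\sigma_N(r_{n-1})\,\frac{Q^{\kappa}}{2^l N}\Bigl(1+\tfrac{||r'|^2-r_n|}{2^l NQ}\Bigr)^{-100n}\,d(|r'|^2-r_n,Q),
\end{equation*}
while the case $|r'|^2=r_n$ contributes $0$ thanks to the mean-zero identity \eqref{eq:meanzero}. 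The analogous bound for $\widehat{m^0_Q}$ replaces the prefactor by $Q^{1+\kappa}/N^2$ and the localization scale $2^l NQ$ by $N^2$.

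Given the parameter $D$ I would split $\widehat{m_{Q,l}}=K_{\mathrm{good}}+K_{\mathrm{bad}}$ according to whether $d(|r'|^2-r_n,Q)\le D$ or $>D$, and apply Young's inequalities
\begin{equation*}
\|f\ast K_{\mathrm{good}}\|_{\ell^\infty}\le\|K_{\mathrm{good}}\|_{\ell^\infty}\|f\|_{\ell^1},\qquad \|f\ast K_{\mathrm{bad}}\|_{\ell^\infty}\le\|K_{\mathrm{bad}}\|_{\ell^1}\|f\|_{\ell^\infty}.
\end{equation*}
The $\ell^\infty$ bound on $K_{\mathrm{good}}$ is read off directly from the pointwise estimate, since $d\le D$ on its support, and produces the $\|f\|_{\ell^1}$ term in \eqref{eq:e2'}. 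For $\|K_{\mathrm{bad}}\|_{\ell^1}$, using the crude $d(\cdot,Q)\le Q$ on the bad set reduces matters to counting lattice points $r$ with $|r_i|\lesssim N$ ($i<n$) and $d(|r'|^2-r_n,Q)>D$, weighted by the decay factor. A dyadic splitting $||r'|^2-r_n|\sim 2^s$ together with Lemma \ref{lem:divisorssum} gives for each $s$ a count of order $D^{-B'}Q^{\tau}\max(2^s,N^2)N^{n-1}$; since $2^l\le N/Q$ implies $2^l NQ\le N^2$, the resulting geometric series is dominated by its value at $2^s\sim N^2$, leading to $\|K_{\mathrm{bad}}\|_{\ell^1}\lesssim D^{-B'}Q^{1+\kappa+\tau}N^n/2^l$. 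Choosing $\tau\le\kappa$ and $B'\ge B$ produces the first summand in \eqref{eq:e2'}. The argument for $\widehat{m^0_Q}$ is parallel, with the decay scale $N^2$ in place of $2^l NQ$ throughout.

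The main obstacle I foresee is the bookkeeping of the various powers of $Q$, $N$ and $2^l$ that accumulate: one factor of $Q$ from Lemma \ref{lem:bourg}, one from the crude bound $d\le Q$ on the bad set, and, in the $m^0_Q$ case, additional factors coming from the $q$-range; since the target exponents $Q^{2+2\kappa}N^n(2^l Q)^{-1}$ and $Q^{2+2\kappa}N^{n-1}$ leave essentially no room to spare, the auxiliary parameters $\tau$ and $B'$ in Lemma \ref{lem:divisorssum} have to be chosen carefully so that the exponents of $Q$ match and the $D^{-B'}$ gain dominates the claimed $D^{-B}$.
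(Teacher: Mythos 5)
Your proposal is correct and follows essentially the same route as the paper: the same pointwise kernel bound via \eqref{eq:FTeta}, the mean-zero cancellation at $|r'|^2=r_n$, Lemma \ref{lem:bourg}, the good/bad split at the divisor threshold $D$, Young's inequalities, and Lemma \ref{lem:divisorssum} for the bad set. The only (harmless) cosmetic difference is that the paper first truncates to $|r_n|\le CQ^\kappa N^2$ and disposes of the tail as a separate error term using the trivial bound $\sum_q|\sum_a e(ak/q)|\le Q^2$, whereas you absorb the tail into the dyadic summation via the Schwartz decay of $\widehat{\varphi}$.
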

\medskip

\begin{rem}
The novelty of \eqref{eq:e2'} and \eqref{eq:e4'} compared to their counterparts from Lemma \ref{lem:maj} is the lack of the $N^\epsilon$ term, which is substituted with the flexible variable $D$. This comes at the expense of introducing an extra term involving $\|f\|_{\ell^\infty(\Z^n)}$, that will prove to be harmless.
\end{rem}
\begin{proof}
Estimates \eqref{eq:e1} and \eqref{eq:e3}
follow from Lemma \ref{lem:maj}. It remains to prove \eqref{eq:e2'}, the argument for \eqref{eq:e4'} being analogous.
 Recall that in the proof of Lemma \ref{lem:maj} we showed that if $r=(r',r_n)\in\Z^n$ with $r'=(r_1,r_2,\ldots,r_{n-1})\in\Z^{n-1}$ is such that $|r'|^2=r_n$, then  $\widehat{m_{Q,l}}(r)=0$.

Therefore we can assume that $||r'|^2-r_n|\ge 1$, in which case we can estimate
$$
|\widehat{m_{Q,l}}(r)|\lesssim_\kappa
$$
$$
\sigma_N(r_1)\dots  \sigma_N(r_{n-1})\frac{1}{2^l NQ}\left(1+\frac{||r'|^2-r_n|}{2^l NQ}\right)^{-1/\kappa}\sum_{Q/2\le q\le Q}\left|\sum_{a\in A_q}e\left(\frac{a}{q}\left(|r'|^2-r_n\right)\right)\right|.
$$
Let us now fix a large constant $C>0$  and decompose
\begin{align*}
f\ast\widehat{m_{Q,l}}(x)=\sum_{\substack{|r_1|,\dots,|r_{n-1}|\le 2N \\ |r_n|\le C Q^\kappa N^2\\ ||r'|^2-r_n|\ge 1}}f(x-r)\widehat{m_{Q,l}}(r)+\sum_{\substack{|r_1|,\dots,|r_{n-1}|\le 2N \\ |r_n|\ge C  Q^\kappa N^2}}f(x-r)\widehat{m_{Q,l}}(r)=:I_1+I_2.
\end{align*}
To bound the second term we use  the trivial bound
$$\sum_{Q/2\le q\le Q}\left|\sum_{a\in A_q}e\left(\frac{a}{q}\left(|r'|^2-r_n\right)\right)\right|\le Q^2$$
and the inequality  $\left||r'|^2-r_n\right|\gtrsim Q^\kappa N^2\gtrsim 2^lNQ^{1+\kappa}$ to get
$$
|I_2(x)|\lesssim_{\kappa} \frac{1}{2^l N Q}\left(\frac{2^lNQ}{2^lNQ^{1+\kappa}}\right)^{1/\kappa}Q^2\|f\|_{\ell^1(\Z^n)}=\frac{1}{2^lN}\|f\|_{\ell^1(\Z^n)}.
$$
Note that clearly $\frac{1}{2^l N}\le \frac{DQ^{\kappa}}{2^l N}$, so the contribution from $I_2$ is controlled by the right-hand side of \eqref{eq:e2'}. Thus $I_2$ can be thought of as an error term.

It remains to deal with $I_1$.
Using Lemma \ref{lem:bourg} and then Lemma \ref{lem:divisorssum} (applied with $K=C N^2 Q^\kappa$) we get for any $x\in\Z^n$
\begin{align*}
|I_1(x)|&\lesssim\frac{1}{2^l NQ}\sum_{\substack{|r_1|,\dots,|r_{n-1}|\le 2N \\ |r_n|\le C Q^\kappa N^2\\ ||r'|^2-r_n|\ge 1}}|f(x-r)| \sum_{Q/2\le q\le Q}\left|\sum_{a\in A_q}e\left(\frac{a}{q}\left(|r'|^2-r_n\right)\right)\right|\\
&\lesssim _\kappa\frac{Q^{1+\kappa}}{2^l NQ}\sum_{\substack{|r_1|,\dots,|r_{n-1}|\le 2N \\ |r_n|\le C Q^\kappa N^2\\ ||r'|^2-r_n|\ge 1}}|f(x-r)|d(|r'|^2-r_n,Q)\\
&\lesssim_\kappa \frac{Q^{1+\kappa}}{2^l NQ}\left(D\sum_{\substack{|r_1|,\dots,|r_{n-1}|\le 2N \\ |r_n|\le C Q^\kappa N^2\\ ||r'|^2-r_n|\ge 1\\d(|r'|^2-r_n,Q)\le D}}|f(x-r)|+\sum_{\substack{|r_1|,\dots,|r_{n-1}|\le 2N \\ |r_n|\le C Q^\kappa N^2\\ ||r'|^2-r_n|\ge 1\\ d(|r'|^2-r_n,Q)> D}}|f(x-r)|d(|r'|^2-r_n,Q)\right)\\
&\lesssim_{\kappa,B}  \frac{Q^{\kappa}}{2^l N}\left(D\|f\|_{\ell^1(\Z^n)}+  D^{-B}Q^{1+\kappa} N^{n+1}\|f\|_{\ell^\infty(\Z^n)}\right),
\end{align*}
where in the last estimate we used a trivial bound $d(|r'|^2-r_n,Q)\le Q$.
Therefore \eqref{eq:e2'} is proved.
\medskip

 \end{proof}
Choosing suitably the values of the parameters we get the following corollary.
\begin{cor}\label{cor:improvedest}
For any $\tau>0$, $B>0$ and for any $\kappa>0$, the following estimates hold uniformly over $Q,N,M\ge 1$ and $l\ge 0$
\begin{align}
\label{eq:c2'}
\|f\ast\widehat{m_{Q,l}}\|_{\ell^\infty(\Z^n)}&\lesssim_{\kappa,\tau,B} N^n (2^l Q)^{-1}M^{-B}\|f\|_{\ell^\infty(\Z^n)}+\frac{M Q^{\kappa+\tau}}{N 2^l }\|f\|_{\ell^1(\Z^n)},
\\
\label{eq:c4'}
\|f\ast\widehat{m^0_{Q}}\|_{\ell^\infty(\Z^n)}&\lesssim_{\kappa,\tau,B} N^{n-1}M^{-B}\|f\|_{\ell^\infty(\Z^n)}+\frac{M Q^{1+\kappa+\tau}}{N^2}\|f\|_{\ell^1(\Z^n)}.
\end{align}
\end{cor}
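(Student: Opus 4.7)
My plan is to derive Corollary \ref{cor:improvedest} directly from Proposition \ref{prop:improvedest} by a single substitution of the free parameter $D$, combined with a judicious choice of the exponent $B$ used in invoking the proposition. The proposition already has the correct shape: a term involving $\|f\|_{\ell^\infty(\Z^n)}$ carrying a negative power of $D$, plus a term involving $\|f\|_{\ell^1(\Z^n)}$ carrying a positive power of $D$. The role of the new parameter $M$ in the corollary is simply to take over the role of $D$, while the new exponent $\tau$ is used to absorb the spurious polynomial factors in $Q$ that appear in the proposition.

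Concretely, I would fix $\kappa, \tau, B > 0$ as in the statement of the corollary and then apply Proposition \ref{prop:improvedest} with the same $\kappa$ but with a new exponent $B' := \max\{B, (2+2\kappa)/\tau\}$ in place of $B$, and with the substitution $D := MQ^{\tau}$. For the first estimate, the $\ell^1$ term then becomes
\[
\frac{D Q^{\kappa}}{N 2^l}\|f\|_{\ell^1(\Z^n)} = \frac{MQ^{\kappa+\tau}}{N 2^l}\|f\|_{\ell^1(\Z^n)},
\]
which matches \eqref{eq:c2'} exactly. The $\ell^\infty$ term becomes
\[
Q^{2+2\kappa} N^n (2^l Q)^{-1} (MQ^{\tau})^{-B'} \|f\|_{\ell^\infty(\Z^n)}
= N^n (2^l Q)^{-1} M^{-B'} Q^{2+2\kappa - \tau B'}\|f\|_{\ell^\infty(\Z^n)}.
\]
By the choice of $B'$, the exponent $2 + 2\kappa - \tau B' \le 0$, so $Q^{2+2\kappa - \tau B'} \le 1$ uniformly in $Q \ge 1$; and since $M \ge 1$ and $B' \ge B$, we have $M^{-B'} \le M^{-B}$, giving the desired bound in \eqref{eq:c2'}. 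The argument for \eqref{eq:c4'} is identical: the same substitution $D = MQ^{\tau}$ in the second bound of Proposition \ref{prop:improvedest} turns the $\ell^1$ coefficient $DQ^{1+\kappa}/N^2$ into $MQ^{1+\kappa+\tau}/N^2$, and the $Q^{2+2\kappa}$ prefactor on the $\ell^\infty$ side is absorbed into $Q^{-\tau B'}$ in the same manner.

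There is essentially no obstacle here; the step is a bookkeeping exercise in parameter juggling. The only point worth emphasizing is that because the proposition is \emph{not} stated with implicit constants depending on $\tau$ (only on $B$ and $\kappa$), one must pass from $B$ in the corollary to the enlarged exponent $B'$ inside the proposition, letting the implicit constant in the corollary depend on $\tau$ through this choice. This is why the corollary is allowed to carry $\lesssim_{\kappa,\tau,B}$ even though $\tau$ does not appear on the right-hand side.
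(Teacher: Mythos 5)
Your argument is correct and is essentially identical to the paper's own proof: both substitute $D=MQ^{\tau}$ into Proposition \ref{prop:improvedest} and enlarge the exponent to $B'\ge(2+2\kappa)/\tau$ so that $Q^{2+2\kappa}D^{-B'}\le M^{-B'}\le M^{-B}$ using $Q,M\ge 1$. No gaps.
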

\begin{proof}
Since $M\ge 1$, it suffices to  assume $B>(2+2\kappa)/\tau$. Take $D=M Q^\tau$ in \eqref{eq:e2'} and \eqref{eq:e4'}. It suffices to note that
\begin{equation*}
Q^{2+2\kappa}D^{-B}<M^{-B}.
\end{equation*}
\end{proof}
Finally, we are in a position to obtain the improvement of \eqref{eq:maj3}.
\begin{cor}\label{cor:improvedest}
Let $\frac{n+1}{n}< p\le 2$. Then for any $M\ge 1$ and  $B>0$ the following estimate holds
\begin{align}
&\|\widehat{m^{\textrm{maj}}_{N}}\ast f\|_{\ell^{p'}(\Z^n)}\lesssim_{B} N^{n-1} M^{-B}\|f\|_{\ell^{p'}(\Z^n)}+M N^{2(\frac{n+1}{p'}-1)}\|f\|_{\ell^{p}(\Z^n)} \label{eq:majfinal}
\end{align}
for any $f=\mathbbm{1}_E$, where $E\subset \Z^n$ is an arbitrary finite set.
\end{cor}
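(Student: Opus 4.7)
The plan is to estimate each dyadic piece $\widehat{m_{Q,l}}\ast\mathbbm{1}_E$ and $\widehat{m^0_Q}\ast\mathbbm{1}_E$ separately in $\ell^{p'}(\Z^n)$, and then assemble $\widehat{m^{\text{maj}}_N}\ast\mathbbm{1}_E$ via the triangle inequality over dyadic $Q\le N/10$ and $2^l\le N/Q$. The bridge between the $\ell^2\to\ell^2$ estimate \eqref{eq:e1} and the two-term $\ell^\infty$ estimate \eqref{eq:c2'} is the elementary log-convexity inequality
\begin{equation*}
\|g\|_{\ell^{p'}(\Z^n)}\le\|g\|_{\ell^{\infty}(\Z^n)}^{1-2/p'}\|g\|_{\ell^{2}(\Z^n)}^{2/p'},\qquad p'\ge 2,
\end{equation*}
which I would apply to $g=\widehat{m_{Q,l}}\ast\mathbbm{1}_E$, using the trivial identities $\|\mathbbm{1}_E\|_{\ell^\infty}=1$, $\|\mathbbm{1}_E\|_{\ell^1}=|E|$, $\|\mathbbm{1}_E\|_{\ell^2}=|E|^{1/2}$. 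Using the subadditivity $(a+b)^{1-2/p'}\le a^{1-2/p'}+b^{1-2/p'}$ to split the $\ell^\infty$ factor, I get two summands: a \emph{small-data} contribution carrying $M^{-B(1-2/p')}$ and a \emph{large-data} contribution carrying $M^{1-2/p'}$.

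After collecting exponents, the small-data contribution equals $N^{n-1}(2^l)^{-1+(n+1)/p'}Q^{2/p'-1}M^{-B(1-2/p')}|E|^{1/p'}$. Since $p>\frac{n+1}{n}$ is equivalent to $p'<n+1$, the $2^l$-exponent $-1+(n+1)/p'$ is strictly positive, so $\sum_{2^l\le N/Q}$ is dominated by its top scale and contributes a factor $(N/Q)^{-1+(n+1)/p'}$; this collapses all $N$-powers to $N^{n-1}$ and leaves a $Q$-exponent equal to $(1-n)/p'$, which is strictly negative for $n\ge 2$, so $\sum_Q$ is a convergent geometric series yielding $N^{n-1}M^{-B(1-2/p')}|E|^{1/p'}$. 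The large-data contribution reads $M^{1-2/p'}Q^{(\kappa+\tau)(1-2/p')}(N2^l)^{(n+1)/p'-1}|E|^{1/p}$; again the $2^l$-sum is dominated by its top, and the ensuing $Q$-sum converges provided $\kappa+\tau$ is chosen so small that $(\kappa+\tau)(1-2/p')+1-(n+1)/p'<0$. This is possible precisely because $p>\frac{n+1}{n}$ forces $(n+1)/p'-1>0$. Using $M^{1-2/p'}\le M$, this contribution sums to $MN^{2((n+1)/p'-1)}|E|^{1/p}$.

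I would treat the pieces $\widehat{m^0_Q}\ast\mathbbm{1}_E$ in the same way, using \eqref{eq:e3} and \eqref{eq:c4'} in place of \eqref{eq:e1} and \eqref{eq:c2'}, with $N^2/Q$ playing the role of $N2^l$ and no $l$-sum; the exponent arithmetic is parallel and yields the same two final estimates. Finally, I would relabel $B\mapsto B/(1-2/p')$ to convert $M^{-B(1-2/p')}$ into $M^{-B}$; this is legitimate for $p<2$, and at the endpoint $p=2$ the small-data contribution just gives $N^{n-1}|E|^{1/2}$ without $M$-decay, which is harmless because $M\ge 1$ makes it subordinate to the large-data term $MN^{n-1}|E|^{1/2}$ of \eqref{eq:majfinal}. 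The main technical obstacle is not any single conceptual step but the tight exponent bookkeeping, and the hypothesis $p>\frac{n+1}{n}$ gets used twice: once to make the $2^l$-sum dominated by its top scale, and once to permit a small $\kappa+\tau$ perturbation in the $Q$-sum for the large-data term.
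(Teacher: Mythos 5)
Your proposal is correct and follows essentially the same route as the paper: interpolate the $\ell^2\to\ell^2$ bounds \eqref{eq:e1}, \eqref{eq:e3} against the two-term $\ell^\infty$ bounds \eqref{eq:c2'}, \eqref{eq:c4'} via log-convexity for indicator functions (the paper substitutes $M_*=M^{1/(1-\theta)}$ up front where you relabel $B$ at the end, and your separate treatment of $p=2$ is a nice touch since that substitution degenerates there), then sum the geometric series in $l$ and $Q$ using $p>\frac{n+1}{n}$ and the smallness of $\kappa+\tau$, exactly as in the paper's choice of $\sigma$. The only blemish is a typo in your per-scale small-data term, whose $N$-power should be $N^{\,n-(n+1)/p'}$ rather than $N^{n-1}$ before the $l$-sum, as your own subsequent statement that the $l$-sum ``collapses all $N$-powers to $N^{n-1}$'' confirms.
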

\begin{proof}

Fix any $\frac{n+1}{n}< p\le2$ and let $\theta=\frac{2}{p'}$. Then
\begin{align*}
\frac{1}p&=\frac{1-\theta}1+\frac{\theta}2,\\
 \frac{1}{p'}&=\frac{\theta}2+\frac{1-\theta}{\infty}
\end{align*}
and notice that for any characteristic function $f$  we have
\begin{align*}
\|f\|_{\ell^{2}(\Z^n)}^\theta \|f\|_{\ell^{\infty}(\Z^n)}^{1-\theta}&=\|f\|_{\ell^{p'}(\Z^n)},\\
\|f\|_{\ell^{2}(\Z^n)}^\theta \|f\|_{\ell^{1}(\Z^n)}^{1-\theta}&=\|f\|_{\ell^{p}(\Z^n)}.
\end{align*}
Due to H\"older's inequality, the equality sign in the above relations can be replaced with $\ge $ for arbitrary functions. However, in our case the inequality $\le$ will be needed, which justifies the use of characteristic functions.
\medskip

Interpolating \eqref{eq:e1} and \eqref{eq:e3} with \eqref{eq:c2'} and \eqref{eq:c4'}, respectively, where the latter two are applied with $\tau,\kappa>0$ such that $(1-\frac{2}{p'})(\tau+\kappa)<\frac{n+1}{p'}-1$ and $M_*=M^{\frac1{1-\theta}}$, we get
\begin{align}\nonumber
\|f\ast& \widehat{m_{Q,l}}\|_{\ell^{p'}(\Z^n)}\\&\lesssim\nonumber \|f\ast \widehat{m_{Q,l}}\|^\theta_{\ell^{2}(\Z^n)}\|f\ast \widehat{m_{Q,l}}\|^{1-\theta}_{\ell^{\infty}(\Z^n)}\\
&\lesssim_{B,\tau,\kappa} \nonumber
\left( (N 2^l)^{(n-1)/2}\|f\|_{\ell^2(\Z^n)}\right)^\theta\left(N^n (2^l Q)^{-1}M_*^{-B}\|f\|_{\ell^\infty(\Z^n)}+\frac{M_* Q^{1+\kappa+\tau}}{N 2^l Q}\|f\|_{\ell^1(\Z^n)}\right)^{1-\theta}\\
&\lesssim_B \label{eq:lpmajl}
N^{n-\frac{n+1}{p'}}M^{-B}Q^{-\sigma}(2^l Q)^{\frac{n+1}{p'}-1}\|f\|_{\ell^{p'}(\Z^n)}+Q^{-\sigma}(N 2^l Q)^{\frac{n+1}{p'}-1}M \|f\|_{\ell^{p}(\Z^n)},
\end{align}
where $\sigma:=\min\{\frac{n-1}{p'}, \frac{2}{p'}(\frac{n+1}2+\tau+\kappa)-1-\tau-\kappa\}>0.$
\medskip

Similarly, we get with the same $\sigma$ as above,
\begin{align}\label{eq:lpmaj0}
\|f\ast \widehat{m^0_{Q}}\|_{\ell^{p'}(\Z^n)}\lesssim_B
N^{n-1}M^{-B}Q^{-\sigma}\|f\|_{\ell^{p'}(\Z^n)}+Q^{-\sigma}(N^2)^{\frac{n+1}{p'}-1}M \|f\|_{\ell^{p}(\Z^n)}.
\end{align}

Summing up the estimates \eqref{eq:lpmajl} and \eqref{eq:lpmaj0} we get
\begin{align}
\|\widehat{m^{\textrm{maj}}_{N}}&\ast f\|_{\ell^{p'}(\Z^n)}\\&\le \sum_{\substack{Q\in\mathcal{D}\\ Q\le N/10}}\left(\|\widehat{m^0_{Q}}\ast f\|_{\ell^{p'}(\Z^n)}+\sum_{1\le 2^l\le N/Q} \|\widehat{m_{Q,l}}\ast f\|_{\ell^{p'}(\Z^n)}\right) \nonumber
\\\nonumber
&\lesssim
\sum_{\substack{Q\in\mathcal{D}\\ Q\le N/10}}\left(N^{n-1}M^{-B}Q^{-\sigma} +\sum_{1\le 2^l\le N/Q}N^{n-\frac{n+1}{p'}}M^{-B}Q^{-\sigma}(2^l Q)^{\frac{n+1}{p'}-1}\right)\|f\|_{\ell^{p'}(\Z^n)}
\\\nonumber
&+\sum_{\substack{Q\in\mathcal{D}\\ Q\le N/10}}\left(Q^{-\sigma}(N^2)^{\frac{n+1}{p'}-1}M +\sum_{1\le 2^l\le N/Q}Q^{-\sigma}(N 2^l Q)^{\frac{n+1}{p'}-1}M\right)\|f\|_{\ell^{p}(\Z^n)}
\\ \nonumber
&\lesssim_B N^{n-1} M^{-B}\|f\|_{\ell^{p'}(\Z^n)}+M N^{2(\frac{n+1}{p'}-1)}\|f\|_{\ell^{p}(\Z^n)},
\end{align}
provided that $\frac{n+1}{p'}-1\ge 0.$ It remains to notice that this condition is equivalent to $p\ge \frac{n+1}{n}.$ Note the the condition $\sigma>0$ insures that no additional logarithmic terms are introduced. 

\end{proof}
Now we are ready to present the proof of the main result of the paper. The argument relies on the ideas from \cite{bourg}.
\bigskip

\begin{proof}[Proof of Theorem \ref{thm:main}]
Note that it suffices to prove that for any $\frac{n+3}{n+1}<p\le 2$ and any $p< q<p'$ one has
\begin{equation}\label{eq:main*}
\| A_N f\|_{\ell^{q}(\Z^n)}\lesssim  N^{-(n+1)(1/p-1/q)}\|f\|_{\ell^p(\Z^n)}.
\end{equation}
Indeed, interpolating \eqref{eq:main*} with trivial estimate, see \eqref{trivial},
\begin{equation*}
\| A_N f\|_{\ell^{q}(\Z^n)}\le  \|f\|_{\ell^q(\Z^n)},\qquad q\in[1,\infty],
\end{equation*}
gives in particular \eqref{eq:main} for $\frac{n+3}{n+1}<p\le 2$, see Figure \ref{fig_1} below.
 \begin{figure}[h!]
\begin{tikzpicture}[scale=2.8]
\draw[arrows=-angle 60] (0,0) -- (0,2.5);
\draw[arrows=-angle 60] (0,0) -- (2.5,0);
\node at (-0.15,2.3) {$1/q$};
\node at (2.3,-0.1) {$1/p$};
\draw[very thin] (2.0,-0.05) -- (2.0,0.05);
\node at (2.0,-0.2) {$1$};
\draw[very thin] (1.6,-0.05) -- (1.6,0.05);
\node at (1.6,-0.2) {$\frac{n+1}{n+3}$};
\draw[very thin] (-0.05,2) -- (0.05,2);
\node at (-0.2,2) {$1$};
\draw[very thin] (-0.05,0.4) -- (0.05,0.4);
\node at (-0.2,0.4) {$\frac{2}{n+3}$};
\fill[black!5!white] (0,0)--(1.6,0.4) -- (2,2); 
\draw[thick, dashed] (0,0) -- (1.6,0.4);
\draw[thick, dashed] (2,2) -- (1.6,0.4);
\draw[very thin, dashed] (1.5,0.6) -- (0.6,0.6);
\draw[thick] (0,0) -- (2,2);
\draw[thick] (1.6,0.4) -- (1,1);
\node[rotate=45] at (0.95,1.05) {$q=p$};
\node[rotate=-45] at (1.3,0.8) {$q=p'$};
\filldraw[fill=white] (1.6,0.4) circle(1pt);
\filldraw[fill=black] (1.55,0.6) circle(1pt);
 \end{tikzpicture}
\caption{Visualization of the interpolation scheme used in the proof of Theorem \ref{thm:main}.} \label{fig_1}
\end{figure}
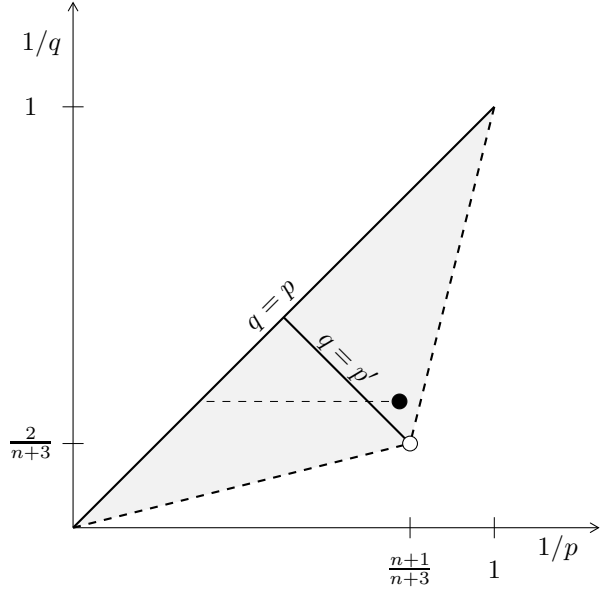
\bigskip

Fix $\frac{n+3}{n+1}<p\le 2$, $q<q_1<p'$ and let $f\in\ell^p(\Z^n)$ be positive and such that $\|f\|_{\ell^p(\Z^n)}=1$, which we clearly can assume without loss of generality. Moreover, for $\lambda>0$  define the level set
$$
E_\lambda=\{m\in\Z^n:A_N f(m)>\lambda\}
$$
and let $F=\mathbbm{1}_{E_\lambda}$.
Using positivity and then H\"older's inequality we obtain
\begin{align*}
N^{n-1}\lambda |E_\lambda|\le N^{n-1}\langle A_N f, F \rangle=\langle \widehat{m_N}\ast f, F \rangle=\langle f, R(\widehat{m_N})\ast F \rangle\le \|R(\widehat{m_N})\ast F\|_{\ell^{p'}(\Z^n)},
\end{align*}
where $R$ is the reflection operator $Rg(x)=g(-x)$.
\medskip

Combining \eqref{eq:min} with \eqref{eq:majfinal} we get  for each $M\ge 1$
\begin{align*}
\lambda |E_\lambda|&\le N^{-(n-1)}\|R(\widehat{m_N})\ast F\|_{\ell^{p'}(\Z^n)}\\&\lesssim_\epsilon M^{-B}|E_\lambda|^{1/p'}+\left(M N^{-(n+1)\left(\frac{1}{p}-\frac{1}{p'}\right)}+N^{-\frac{n-1}p+\eps}\right)|E_\lambda|^{1/p}\\
&\lesssim M^{-B}|E_\lambda|^{1/p'}+M N^{-(n+1)\left(\frac{1}{p}-\frac{1}{p'}\right)}|E_\lambda|^{1/p},
\end{align*}
where the last estimate holds provided that $\eps\le (n+3)\left(\frac{n+1}{n+3}-\frac{1}p\right)$.
\medskip

It follows that
\begin{equation*}
 |E_\lambda|\lesssim M^{-Bp}\lambda^{-p}+M^{p'} N^{-(n+1)\left(\frac{p'}{p}-1\right)}\lambda^{-p'}.
\end{equation*}
Let $\tau:=p'-q_1>0$ and take $M=N^{(n+1)\frac{\tau}{p p'}}\lambda^{\frac{\tau}{p'}}$. Note that  $M\ge 1$ if and only if $\lambda\ge N^{-\frac{n+1}p}$. Thus letting $B=p'\left(\frac{p'-\tau}{p}-1\right)\tau^{-1}=\frac{p'(q_1-p)}{p\tau}>0$ we have
$$M^{-Bp}\lambda^{-p}=M^{p'} N^{-(n+1)\left(\frac{p'}{p}-1\right)}\lambda^{-p'}=\lambda^{-q_1}N^{-(n+1)\left(\frac{q_1}{p}-1\right)}.$$
If  $\lambda\ge N^{-\frac{n+1}p}$ we get
\begin{equation}\label{eq:level}
 |E_\lambda|\lesssim \lambda^{-q_1}N^{-(n+1)\left(\frac{q_1}{p}-1\right)}.
\end{equation}
Note that by Tschebyshev's inequality and \eqref{trivial} we also have for any $\lambda\ge 0$
\begin{equation}\label{eq:level2}
|E_\lambda|\le \frac{\|A_N f\|^p_{\ell^p(\Z^n)}}{\lambda^p}\le \lambda^{-p}.
\end{equation}
Finally, applying the layer cake formula and then \eqref{eq:level2} and \eqref{eq:level} we obtain
\begin{align*}
q^{-1}\|A_N f\|^q_{\ell^q(\Z^n)}&= \int_0^{N^{-\frac{n+1}p}}\lambda^{q-1}|E_\lambda|d\lambda+\int_{N^{-\frac{n+1}p}}^\infty\lambda^{q-1}|E_\lambda|d\lambda\\
&\lesssim  \int_0^{N^{-\frac{n+1}p}}\lambda^{q-1-p}d\lambda+N^{-(n+1)\left(\frac{q_1}p-1\right)}\int_{N^{-\frac{n+1}p}}^\infty\lambda^{q-1-q_1}d\lambda\\
&\simeq N^{-(n+1)\left(\frac{q}p-1\right)},
\end{align*}
which gives the desired estimate.
\bigskip

It remains to prove the necessity part of the theorem. Letting
$$
f=\mathbbm{1}_{\{1,2,\dots,2N\}\times\dots\times \{1,2,\dots,2N\}\times \{1,2,\dots,n N^2\}},
$$
we get
\begin{align*}
\|f\|_{\ell^p(\Z^n)}&\simeq N^{\frac{n+1}p}
\end{align*}
and
\begin{align*}
\|A^{\mathbb{P}}_N f\|_{\ell^{p'}(\Z^n)}\gtrsim \left(\sum_{k_1=1}^N\dots \sum_{k_{n-1}=1}^N \sum_{k_n=1}^{N^2} 1\right)^{1/p'}=N^{\frac{n+1}{p'}},
\end{align*}
since
$$
A^{\mathbb{P}}_N f(x_1,\dots, x_n)=1,\quad\textrm{for}\quad x_i\in\{1,\dots, N\}, \quad i=1,\dots, n-1 \quad\textrm{and}\quad x_n\in\{1,\dots, N^2\}.
$$
This shows that
$$
\frac{\|A^{\mathbb{P}}_N f\|_{\ell^{p'}(\Z^n)}}{\|f\|_{\ell^p(\Z^n)}}\gtrsim N^{-(n+1)(\frac{1}{p}-\frac{1}{p'})}=N^{-(n+1)(\frac{2}{p}-1)}.
$$
Next, letting $f=\delta_0$ we get
\begin{align*}
\frac{\|A^{\mathbb{P}}_N f\|_{\ell^{p'}(\Z^n)}}{\|f\|_{\ell^p(\Z^n)}}&\ge \frac{\left(\sum_{k_1=1}^N\dots \sum_{k_{n-1}=1}^N  A^{\mathbb{P}}_N f(-k_1,\dots, -k_{n-1}, -(k_1^2+\dots+k_{n-1}^2))^{p'}\right)^{1/p'}}{1}\\
&=\frac{N^{-(n-1)}\left(\sum_{k_1=1}^N\dots \sum_{k_{n-1}=1}^N  1\right)^{1/p'}}{1}=N^{-\frac{n-1}{p}}.
\end{align*}
Observe that
$
-\frac{n-1}{p}\le-(n+1)(\frac{2}p-1)
$
if and only if $p\ge \frac{n+3}{n+1}$, which concludes the proof of the necessity part of the theorem.

 \end{proof}

\end{document}